\title{Upper Tails for edge eigenvalues of Random Graphs}
\author[Bhattacharya]{Bhaswar B. Bhattacharya}
\address{B.\ B.\ Bhattacharya\hfill\break
	Department of Statistics\\ University of Pennsylvania\\ Philadelphia, PA 19104, USA.}
\email{bhaswar@wharton.upenn.edu}
\author[Ganguly]{Shirshendu Ganguly}
\address{S. Ganguly \hfill\break
	Department of Statistics\\ UC Berkeley \\ 
	Berkeley, California, CA 94720, USA.}
\email{sganguly@berkeley.edu}
\begin{document}

\maketitle

\begin{abstract}  The upper tail problem for the largest eigenvalue of the Erd\H{o}s--R\'enyi random graph $\mathcal{G}_{n,p}$ is to estimate the probability that the largest eigenvalue of the adjacency matrix of $\mathcal{G}_{n,p}$ exceeds its typical value by a  factor of $1+\delta$. In this note we show that  for $\delta >0$ fixed, and $p \rightarrow 0$ such that $n^{\frac{1}{2}} p \rightarrow \infty$, the upper tail probability for the largest eigenvalue of $\mathcal{G}_{n,p}$ is 
$$\exp\left[-(1+o(1)) \min\left\{\tfrac{(1+\delta)^2}{2}, \delta(1+\delta) \right\} n^{2}p^{2}\log (1/p)\right].$$ 
In the same regime of $p$, we show that the second largest eigenvalue $\lambda_2(\cG_{n,p})$ of  the adjacency matrix of $\mathcal{G}_{n,p}$ satisfies  
$$\P(\lambda_2(\cG_{n,p})\ge  \delta np) = \exp\left[-(1+o(1)) \tfrac{1}{2} \delta^2n^2p^2 \log (1/p) \right],$$
where $\delta =\delta_n < 1$ can depend on $n$ such that $\delta n^{\frac{1}{2}} p \rightarrow \infty$, which covers deviations of $\lambda_2(\cG_{n,p})$ between $n^{\frac{1}{2}}$ and $np$.  
Our arguments build on recent results on the large deviations of the largest eigenvalue and related non-linear functions of the adjacency matrix  in terms of natural mean-field entropic variational problems.
\end{abstract}

\maketitle

\section{Introduction}

Establishing large deviations for edge eigenvalues (and other spectral functionals) of random matrices is an important problem, which in most cases is widely open. Even in the basic example of self-adjoint Wigner matrices with i.i.d. entries, the question is well-understood only in a few special cases. These include the integrable Gaussian models namely the GOE or GUE where the entries are i.i.d. real and complex Gaussians respectively, crucially relying on the exact joint density of eigenvalues available for these models enabling the usage of Coulomb gas methods. This can be used to exactly compute  the rate function for the large deviation principle (LDP) for the empirical spectral measure \cite{freeentropy} as well as the spectral norm \cite{aging}.  For results in the related setting of Gaussian covariance or Wishart matrices see \cite{satya1,satya2}. 
However, for the non-Gaussian cases, the results have been few and far between. Bordenave and Caputo in \cite{bordenave1} studied LDP for empirical spectral measure for Wigner matrices with stretch exponential tails while the corresponding results for the spectral norm were obtained in \cite{augeri2}. 
Very recently, Guionnet and Husson \cite{guionnet1} established an LDP for the
largest eigenvalue of Rademacher matrices and more generally sub-Gaussian Wigner matrices.

In this note, we consider the upper tail large deviation problem for the extremal/edge eigenvalues of adjacency matrices of sparse  random graphs. To this end, let $\cG_{n,p}$ be the Erd\H os-R\'enyi random graph on $n$ vertices with edge probability $p$. Denote by $\lambda_1(\cG_{n,p}) \geq \lambda_2(\cG_{n,p}) \geq \cdots \geq \lambda_n(\cG_{n,p})$ the eigenvalues of the adjacency matrix $A(\cG_{n,p})$ of $\cG_{n,p}$, arranged in non-increasing order. Here, we particularly focus on the largest and the second largest eigenvalues of $\cG_{n,p}$, namely $\lambda_1(\cG_{n,p})$ and $\lambda_2(\cG_{n,p})$. The typical behaviors of  the edge 
eigenvalues of $\cG_{n,p}$ are known to great precision, for instance, with high probability $ \lambda_1(\cG_{n,p})=(1+o(1)) np$, and $ \lambda_2(\cG_{n,p})=(1+o(1)) \sqrt{np}$, for $p$ not too sparse (see, for example, \cite{yau} and the references therein). General concentration inequalities for the edge eigenvalues of $\cG_{n, p}$ are also well-known \cite{AKV} (see \cite{ev_concentration} for a recent improvement for the largest eigenvalue).

The study of large deviations of random graphs, in spite of being a fairly recent topic, have witnessed a series of breakthroughs over the last decade starting with the seminal work of Chatterjee and Varadhan \cite{CV11} who considered subgraph counts in $\cG_{n,p}$ in the dense regime, that is, $p \in (0, 1)$ is fixed. As a consequence in \cite{CV12}, they proved a large deviations principle for the entire spectrum of $\cG_{n, p}$ thought of as a  countable ordered sequence, with the topology of coordinate wise convergence, at scale  $np$. The problem turned out to be much more challenging in the sparse regime, where $p=p(n) \rightarrow 0$. This was first addressed in a systematic way in the tour-de-force work of Chatterjee and Dembo \cite{CD16}.  Here, the authors explored the more general problem of  approximating the partition function for a general Gibbs measure on the hypercube with the so-called `mean-field' variational  principle; and came up with a notion of complexity of the gradient of the Hamiltonian (of the Gibbs measure) along with additional smoothness properties under which the above approximation holds.  This as a direct consequence reduced the large deviations for subgraph counts in $\cG_{n,p}$ to a `mean-field' entropic variational problem in certain regimes of the sparsity parameter $p$. Thereafter, Eldan \cite{Eldan} obtained an improved set of conditions under which the above reduction holds, approximating the Gibbs measure, up to lower order entropy, by a product measure. Similar results for Gibbs measures beyond the hypercube were obtained by \cite{BM17,yan}, and recently by Austin \cite{austin} for very general product spaces. Further improvements on the range of the sparsity parameter for several spectral and geometric functionals were established independently and simultaneously by Cook and Dembo \cite{CD18} and Augeri \cite{augeri1}. In particular, Augeri \cite{augeri1} establishes the validity of the mean-field variational problem for the upper tail of triangles in $\cG_{n,p}$, for $p \gg n^{-\frac{1}{2}}$.\footnote{We write
 $f \lesssim g$ to denote $f = O(g)$, $f \sim g$ means $f = (1 + o(1))g$, and $f \ll g$ means
$f = o(g)$.} The validity of the mean-field variational problem down to the sparsity of $n^{-1}$ (up to logarithmic factors) has been proved for triangles and counts of non-bipartite graphs by Harel et al. \cite{upper_tail_localization} and, very recently, for counts of general cycles  by Basak and Basu \cite{ab_rb}.\footnote{For $p \lesssim n^{-1}$ up to poly-logarithmic terms, the problem goes through a qualitative change with observables such as cycle counts falling in the Poisson-regime, which dictates the large deviation behavior, and the mean-field approach no longer offers the right perspective. Even though beyond the scope of this paper, obtaining precise bounds on the upper-tail probability in this regime continues to be an important problem of study (see, for example, \cite[Theorem 1.6]{upper_tail_localization} and \cite{warnke_missing_log} for some recent progress and related results).}

Almost in parallel, fortunately, many of the associated entropic variational problems in the space of weighted graphs have been precisely analyzed in a separate series of papers: starting with the upper tail for triangle (more generally any regular subgraph) counts in the dense regime \cite{LZ-dense}, followed by upper tails of clique counts in the sparse case \cite{LZ-sparse}, followed by upper tails of general subgraph counts in sparse case \cite{BGLZ}. The corresponding problem for the lower tail was studied in \cite{Zhao-lower}, where several questions remain open. The analogous problem for the number the arithmetic progressions in a random set was recently studied in \cite{BGSZ}.  

In this work, building on the above recent results of both kinds, we  pin down the exact upper tail rate function for certain regimes of large deviations for  $\lambda_1(\cG_{n,p})$, the operator norm of the centered adjacency matrix $A(\cG_{n,p})- p \bm 1 \bm 1'$,\footnote{For any matrix $Q$, we denote by $Q'$ the transpose of $Q$. Then, note that $A(\cG_{n,p})-p \bm 1 \bm 1'$ is the centered adjacency matrix up to a translation by $p\bI$, where $\bI$ is the identity matrix. Since this deterministically shifts every eigenvalue by $p$ which is asymptotically negligible, we will continue this abuse of terminology.} and the second largest eigenvalue  $\lambda_2(\cG_{n,p})$, in the sparse setting. The proof relies on the connection between the number of cycles in a graph and the spectral moments of the adjacency matrix, and borrows techniques from \cite{BGLZ} about the solution of the associated variational problems for large deviations of subgraph counts. As will be evident, in a sense made precise in this note, the large deviation for the extremal eigenvalues are dictated by low rank deformations.  We leave several interesting questions open for further research.

\subsection{Statement of Results} Let $\sG_n$ denote the set of weighted undirected graphs on $n$ vertices with edge weights in $[0, 1]$, that is, if $A(G)$ is the adjacency matrix of $G$ then
\[
\sG_n =\left\{G_n:  A(G_n)=(a_{ij})_{1\leq i, j \leq n},\, 0\leq a_{ij}\leq 1,\, a_{ij} = a_{ji},\, a_{ii} = 0 \text{ for all } i, j\right\}.
\] 
For $G_n \in \sG_n$, denote by $\lambda_1(G_n) \geq \lambda_2(G_n) \geq \cdots \lambda_n(G_n)$ the eigenvalues of $G_n$ in non-increasing order. Moreover, for $G_n \in \sG_n$, $I_p(G_n)$ is the entropy relative to $p$, that is,
\[I_p(G) :=\sum_{1\leq i < j\leq n}I_p(a_{ij}) \quad \text{where} \quad I_p(x):=x\log\frac{x}{p}+(1-x)\log \frac{1-x}{1-p}\,.\]  

It is well-known that  $\E \lambda_1(\cG_{n,p})=(1+o(1)) np$, for $p \gg n^{-1}$ (up to polynomial factors in $\log n$), Recently, Cook and Dembo \cite[Proposition 1.13]{CD18} established the upper tail large deviations for $\lambda_1(\cG_{n,p})$, in terms of an entropic variational problem. 

\begin{thm}[Cook and Dembo \cite{CD18}] \label{thm:e1_ldp} For any $\theta >1$ fixed and  $n^{-\frac{1}{2}}  \ll p \leq \frac{1}{2}$,
$$-\log \P(  \lambda_1(\cG_{n,p}) \geq \theta np) = (1+o(1)) \phi_1(n ,p, (1+o(1))\theta),
$$
where 
\begin{equation}\label{eq:disvare1} 
\phi_1(n, p, \theta):= \inf\left\{I_p(G_n) : G_n\in \sG_n \text{ with }  \lambda_1(G_n) \geq \theta n p \right\}. 
\end{equation} 
\end{thm}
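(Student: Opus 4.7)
The plan is to establish matching upper and lower bounds on $-\log\P(\lambda_1(\cG_{n,p}) \geq tnp)$, each agreeing with $\phi_1(n,p,(1+o(1))t)$ up to a multiplicative $1+o(1)$, via the standard split into a planting lower bound on the probability and a nonlinear large deviation upper bound.

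For the lower bound on the probability (upper bound on the rate), I would run a direct planting/change-of-measure argument. Fix $\varepsilon > 0$, set $t' = (1+\varepsilon)t$, and select a near-optimal $G^\ast \in \sG_n$ with $\lambda_1(G^\ast) \geq t' np$ and $I_p(G^\ast) \leq \phi_1(n, p, t') + o(\phi_1(n,p,t'))$. Let $\cG^\ast$ be the random graph with independent Bernoulli$(a^\ast_{ij})$ edges, whose law has relative entropy exactly $I_p(G^\ast)$ with respect to $\cG_{n,p}$. By Weyl's inequality and the Bandeira--van Handel (or classical Alon--Krivelevich--Vu) estimate $\|A(\cG^\ast) - A(G^\ast)\|_{\mathrm{op}} = O(\sqrt{np})$ with high probability, one has $\lambda_1(\cG^\ast) \geq \lambda_1(G^\ast) - O(\sqrt{np}) \geq tnp$ with probability $1 - o(1)$, since $\sqrt{np} \ll np$. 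A standard application of the Gibbs variational formula (equivalently, a direct Radon--Nikodym estimate) then yields
\[
\P(\lambda_1(\cG_{n,p}) \geq tnp) \geq \exp\bigl(-(1+o(1)) I_p(G^\ast)\bigr),
\]
and letting $\varepsilon \to 0$ after $n \to \infty$ finishes this direction.

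The upper bound on the probability (the lower bound on $-\log \P$) is the main obstacle and is what forces the sparsity assumption $p \gg n^{-1/2}$. The strategy is to apply a nonlinear large deviation principle to reduce the tail event to the mean-field variational problem $\phi_1$. The key structural feature of $\lambda_1$ is that
\[
\lambda_1(A) = \sup_{v \in S^{n-1}} v^T A v,
\]
a supremum of linear functionals of the edge variables; the ``gradient set'' generated by these linear functionals consists of the rank-one symmetric matrices $v v^T$, a very low-complexity subset of the ambient space of symmetric matrices. This is precisely the hypothesis required by the Chatterjee--Dembo nonlinear LDP framework and its refinements by Eldan, Augeri, and Cook--Dembo. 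The technical core is the quantitative covering number / Gaussian width bound for this gradient set at the resolution dictated by $p$; the threshold $p \gg n^{-1/2}$ is exactly where the resulting approximation error becomes of strictly lower order than the rate $I_p \asymp (np)^2 \log(1/p)$ itself. Once the mean-field reduction is in place, one deduces
\[
-\log\P(\lambda_1(\cG_{n,p}) \geq tnp) \geq (1-o(1)) \inf \left\{I_p(G) : G \in \sG_n,\, \lambda_1(G) \geq (1-o(1)) tnp \right\},
\]
which is exactly $(1-o(1)) \phi_1(n, p, (1-o(1))t)$.

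The principal technical obstacle is the complexity/covering estimate for the set of candidate near-maximizing vectors $v^\ast$: although each matrix $v^\ast (v^\ast)^T$ has rank one, one must work with $\varepsilon$-approximate maximizers whose small coordinates (potentially as small as $n^{-1/2}$) become fragile in the sparse regime. I anticipate the proof requires a truncation of $v^\ast$ to its heavy coordinates combined with a delicate $\varepsilon$-net argument over the resulting low-dimensional slices of the sphere, and it is precisely this step that enforces the sparsity barrier $p \gg n^{-1/2}$.
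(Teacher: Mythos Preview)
The paper does not prove this theorem at all: it is stated as Theorem~\ref{thm:e1_ldp} and attributed entirely to Cook and Dembo \cite{CD18}, and is then used as a black box. The paper's own contribution begins only afterwards, solving the variational problem $\phi_1(n,p,t)$ (Theorem~\ref{thm:ev1}). So there is no ``paper's own proof'' to compare your proposal against.

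That said, your outline is a fair high-level description of the Cook--Dembo argument: the planting/change-of-measure lower bound is standard and essentially as you describe, and the upper bound does go through the nonlinear large deviation framework applied to the supremum-of-linear-functionals representation of $\lambda_1$. One caution: the actual implementation in \cite{CD18} does not proceed via a naive $\varepsilon$-net over approximate maximizers of $v\mapsto v^{\top}Av$ as you suggest in your last paragraph; rather it uses a spectral decomposition/covering argument tailored to low-rank perturbations, and the sparsity threshold $p\gg n^{-1/2}$ emerges from balancing the complexity of this covering against the rate $n^2p^2\log(1/p)$. Your anticipated ``truncation to heavy coordinates plus net on low-dimensional slices'' is plausible heuristically but is not quite how the technical work is organized there, and making that step rigorous at the claimed sparsity would require essentially redoing the content of \cite{CD18}.
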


Here, we complement this result by solving the variational problem $\phi_1(n, p, \theta)$, in the {\it sparse regime}, that is, $p \ll 1$, which combined with the result above gives the precise upper tail asymptotics for $\lambda_1(\cG_{n,p})$:

\begin{thm}\label{thm:ev1}
For any $\delta >0$ fixed and $n^{-\frac{1}{2}} \ll p \ll 1$, 
\begin{align}\label{eq:ev1}
\lim_{n \rightarrow \infty}\frac{-\log \P(\lambda_1(\cG_{n, p}) \ge (1+\delta)np)}{n^{2}p^{2}\log (1/p)}=
\min\left\{\frac{(1+\delta)^2}{2}, \delta(1+\delta) \right\}.
\end{align}
\end{thm}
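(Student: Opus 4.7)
By Theorem~\ref{thm:e1_ldp}, it suffices to solve the variational problem asymptotically and show
\[
\phi_1(n,p,1+\delta) \;=\; (1+o(1))\,c^\ast(\delta)\,n^2p^2\log(1/p),\quad c^\ast(\delta):=\min\bigl\{\tfrac{(1+\delta)^2}{2},\,\delta(1+\delta)\bigr\}.
\]
The plan is to establish matching upper and lower bounds. The two terms in $c^\ast(\delta)$ correspond to two canonical low-rank deformations of the constant-$p$ background, each realising the constraint $\lambda_1\ge(1+\delta)np$: a planted clique of size $\sim(1+\delta)np$, and a ``hub'' configuration on $\sim\delta(1+\delta)np^2$ vertices each joined to every other vertex. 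Which one wins depends on the sign of $\delta-1$, as $\tfrac{(1+\delta)^2}{2}=\delta(1+\delta)$ at $\delta=1$.

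For the upper bound one verifies the two constructions directly. Taking $S\subset[n]$ with $|S|=\lceil(1+\delta)np\rceil+1$ and setting $a_{ij}=1$ for distinct $i,j\in S$ and $a_{ij}=p$ otherwise produces a nonnegative matrix whose $S\times S$ block is a clique with spectral radius $|S|-1\ge(1+\delta)np$; the full $\lambda_1$ inherits this lower bound, and the entropy cost is $\binom{|S|}{2}\log(1/p)\sim\tfrac{(1+\delta)^2}{2}n^2p^2\log(1/p)$. Taking instead $T\subset[n]$ with $|T|\sim\delta(1+\delta)np^2$ and $a_{ij}=1$ whenever $\{i,j\}\cap T\neq\emptyset$, $a_{ij}=p$ otherwise, makes $\operatorname{span}(\mathbf{1}_T,\mathbf{1}_{T^c})$ an invariant $2$-dimensional subspace under $A$; solving the resulting $2\times 2$ secular equation shows that the largest root equals $(1+o(1))(1+\delta)np$ (and can be forced to be at least $(1+\delta)np$ by a $1+o(1)$ adjustment of $|T|$), while the entropy cost is $(|T|(n-|T|)+\binom{|T|}{2})\log(1/p)\sim|T|n\log(1/p)\sim\delta(1+\delta)n^2p^2\log(1/p)$. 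Taking the better of the two realises the upper bound on $\phi_1$.

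For the lower bound, fix $G\in\sG_n$ with $\lambda_1(G)\ge(1+\delta)np$. The key inequality is the spectral moment bound: for any even integer $k$,
\[
\operatorname{tr}(A(G)^k)\;=\;\sum_{i=1}^n\lambda_i(G)^k\;\ge\;\lambda_1(G)^k\;\ge\;\bigl((1+\delta)np\bigr)^k.
\]
The left-hand side is the weighted count of closed walks of length $k$ in $G$, and decomposes as $\operatorname{tr}(A(G)^k)=\sum_H c_H(k)\,N_H(G)$ over connected graphs $H$ admitting a closed Eulerian walk of length $k$, where $N_H$ is the weighted $H$-subgraph count and $c_H(k)$ is combinatorial. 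The two constructions above saturate this moment through distinct walk types: the planted clique through simple cycles $C_k$, and the planted hubs through ``bouncing'' closed walks on tree supports rooted at the hub vertices. Choosing $k=k(n)$ to grow slowly with $n$ and applying the variational analysis of \cite{BGLZ} for subgraph counts to each dominant walk type should yield the two parallel lower bounds $I_p(G)\ge(1-o(1))\tfrac{(1+\delta)^2}{2}n^2p^2\log(1/p)$ (clique regime) and $I_p(G)\ge(1-o(1))\delta(1+\delta)n^2p^2\log(1/p)$ (hub regime). The principal obstacle is showing that no ``hybrid'' $G$ can combine moderate inflations of cycle counts and bouncing-walk counts simultaneously so as to undercut both bounds; this amounts to a precise ``low rank deformation'' statement---that every near-optimal $G$ is $pJ$ plus a rank-one or rank-two nonnegative perturbation whose spectrum reproduces one of the two templates---and requires adapting the entropy decomposition of \cite{BGLZ} (originally designed for a single $N_H$-constraint) to the mixed-walk setting, together with an optimal choice of $k$. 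This is the technical heart of the argument.
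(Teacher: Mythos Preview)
Your upper bound is essentially the paper's: the clique and anti-clique (``hub'') constructions are exactly what is used, and your entropy computations are correct.

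The lower bound, however, contains a genuine gap rooted in a misreading of what the trace equals. You write $\operatorname{tr}(A(G)^k)=\sum_H c_H(k)N_H(G)$ as a sum over \emph{supports} $H$ of closed walks, and then propose to treat the different walk types (simple cycles versus tree-supported ``bouncing'' walks) separately via \cite{BGLZ}, admitting that ruling out hybrids is the hard open step. But no such decomposition is needed: for even $s$ the trace is \emph{exactly} $n^s t(C_s,G)$, where $t(C_s,G)$ is the \emph{homomorphism} density of $C_s$ (allowing non-injective maps, i.e.\ all closed walks). Hence $\lambda_1(G)\ge(1+\delta)np$ immediately gives the single constraint $t(C_s,G)\ge(1+\delta)^sp^s$, and one may invoke the BGLZ variational solution for the single graph $C_s$:
\[
\liminf_{n\to\infty}\frac{\phi_1(n,p,1+\delta)}{n^2p^2\log(1/p)}\;\ge\;\min\Bigl\{\bar\theta_s,\ \tfrac12\bigl((1+\delta)^s-1\bigr)^{2/s}\Bigr\},
\]
where $\bar\theta_s$ is the positive root of $P_{C_s}(\bar\theta_s)=(1+\delta)^s$. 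The two-term structure you are trying to manufacture by splitting the walk types is already present in the BGLZ answer for $C_s$: the term $\tfrac12((1+\delta)^s-1)^{2/s}$ comes from the clique, and the independence-polynomial term $\bar\theta_s$ encodes the anti-clique/hub. One then sends $s\to\infty$ (an iterated limit, with $s$ fixed as $n\to\infty$ first --- not $k=k(n)$ growing, which is not covered by \cite{BGLZ} as stated). Using the closed form $P_{C_s}(x)=\bigl[\tfrac12(1+\sqrt{1+4x})\bigr]^s+\bigl[\tfrac12(\sqrt{1+4x}-1)\bigr]^s$ for even $s$, one checks $\bar\theta_s\to\delta(1+\delta)$ and $\tfrac12((1+\delta)^s-1)^{2/s}\to\tfrac12(1+\delta)^2$, giving the lower bound. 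Your ``principal obstacle'' of hybrid configurations thus dissolves; the work has already been done inside the BGLZ cycle variational problem.
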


The proof of the Theorem is presented in Section \ref{sec:pfe1}. This is done in two steps: For the lower bound on the variational problem, our strategy is to bound the largest eigenvalue $\lambda_1(\cG_{n, p})$ with the Schatten $s$-norm of $\cG_{n, p}$, where $s$ is even.\footnote{For a $n\times n$ symmetric matrix $X$, the Schatten $s$-norm is defined as $||X||_s:=\left(\sum_{i=1}^s |\lambda_i|^s\right)^{\frac{1}{s}}$, where $\lambda_1, \lambda_2, \ldots, \lambda_n$ are the eigenvalues of $X$.}  Then, using the correspondence between the Schatten $s$-norm and the homomorphism density of the $s$-cycle in $G_n$, and analyzing the large $s$ behavior of the corresponding variational problem for $s$-cycles, derived in \cite{BGLZ}, the lower bound follows. The matching upper bound is attained by planting in $\cG_{n, p}$ either a {\it clique} (a set of vertices connected between themselves), or an {\it anti-clique} (a set of vertices connected to every other vertex in the graph)  of the required size. 

\begin{remark} Note that Theorem \ref{thm:ev1} above holds only in the sparse regime, $p \ll 1 $. The dense regime, that is, $p \in (0, 1)$ is fixed, falls in the purview of graphon theory \cite{Lov12}, and the large deviation principle of $\lambda_1(\cG_{n, p})$ is a consequence of the general framework of Chatterjee and Varadhan \cite{CV11,CV12}. The associated variational problem was analyzed by Lubetzky and Zhao \cite{LZ-dense}. They obtained the exact region of {\it replica symmetry}, that is, the set of values of $(p, r)$, where $r=(1+\delta)p$, for which the constant function uniquely minimizes the associated variational problem \cite[Theorem 1.2]{LZ-dense}.
\end{remark}

Next, we study the large deviations of the second largest eigenvalue of $\cG_{n, p}$. To this end, we begin by considering the operator norm of the centered adjacency matrix, that is, $A(\cG_{n,p})-p \bm 1 \bm 1'$, a problem of independent interest.\footnote{Recall that for a $n \times n$ symmetric matrix $A$, the operator norm is defined as: $||A||_{\mathrm{op}}=\sup_{||x||=1} ||A x||$.} The following mean field approximation was proved in \cite{CD18}.
\begin{thm}[Cook and Dembo \cite{CD18}]\label{secondeigcookdembo} For $\frac{\log n}{n} \lesssim p \leq \frac{1}{2}$ and $\theta \gg n^{\frac{1}{2}}$, 
$$-\log \P( ||A(\cG_{n,p})-p \bm 1 \bm 1'||_{\mathrm{op}} \geq \theta) = (1+o(1)) \phi_2(n ,p, \theta+o(\theta)),$$
where 
\begin{equation}\label{eq:disvarlambda}
\phi_2(n, p, \theta):= \inf\left\{I_p(G_n) : G_n\in \sG_n \text{ with }  ||A(G_n)-p \bm 1 \bm 1'||_{\mathrm{op}} \geq \theta \right\}. 
\end{equation} 
\label{thm:center_ldp}
\end{thm}

\begin{remark} Note that the above result gives the upper tail asymptotics for $||A(\cG_{n,p})-p \bm 1 \bm 1'||_{\mathrm{op}}$ for deviations growing faster than $n^{\frac{1}{2}}$. {On the other hand, recall that the typical value of $||A(\cG_{n,p})-p \bm 1 \bm 1'||_{\mathrm{op}}$ is $(1+o(1)) \sqrt{np}$.} As alluded to earlier, recently, Guionnet and Husson \cite{guionnet1} established a large deviations principle for the
largest eigenvalue of $n$-dimensional Wigner matrices, rescaled by $n^{-\frac{1}{2}}$, whose independent, standardized entries have uniformly sub-Gaussian moment generating functions (which allow for Rademacher entries). Observe that even though $A(\cG_{n,p})-\E(A(\cG_{n,p}))$ is a Wigner matrix, such uniform sub-Gaussian domination does not apply in the case when $p \ll 1$. Therefore, establishing the large deviations for  $||A(\cG_{n,p})-p \bm 1 \bm 1'||_{\mathrm{op}}$ in the scale $\sqrt{np}$ in the sparse regime ($p \ll 1$), where the mean-field variational problem \eqref{eq:disvarlambda} no longer determines the rate of the upper tail,  remains open.  
\end{remark}

As in Theorem \ref{thm:ev1}, we can complement Theorem \ref{thm:center_ldp} by solving the variational problem $\phi_2(n, p, \theta)$, in the sparse regime, proving the precise upper tail asymptotics for $ ||A(\cG_{n,p})-p \bm 1 \bm 1'||_{\mathrm{op}}$, for deviations growing faster than $n^{\frac{1}{2}}$.

\begin{thm}\label{thm:ev1_centered} 
{For $n^{-\frac{1}{2}} \ll p \ll 1$ and $\delta=\delta_n=O(1)$  such that $\delta n p \gg n^{\frac{1}{2}}$},
\begin{align}\label{eq:ev1_centered_var}
\phi_2(n, p, \delta n p)= (1+o(1)) \tfrac{1}{2}\delta^2 n^{2}p^{2}\log (1/p). 
\end{align}
This implies, under the same conditions on $p$ and $\delta$ as above,
\begin{align}\label{eq:ev1_centered}
\lim_{n \rightarrow \infty}\frac{-\log \P( ||A(\cG_{n, p})-p \bm 1 \bm 1'||_{\mathrm{op}}\ge \delta np )}{\tfrac{1}{2} \delta^2n^2p^2 \log (1/p)}= 1. 
\end{align} 
\end{thm}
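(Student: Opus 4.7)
The proof splits into two pieces. First, \eqref{eq:ev1_centered_var} implies \eqref{eq:ev1_centered} by a direct appeal to Theorem~\ref{thm:center_ldp}, whose hypotheses are satisfied in our regime: the sparsity condition $\log n/n \lesssim p \leq \tfrac12$ follows from $n^{-1/2} \ll p \ll 1$, and the scale condition $t \gg \sqrt n$ is precisely $\delta np \gg \sqrt n$. So the substance is to establish the variational identity \eqref{eq:ev1_centered_var}, which I would do by matching upper and lower bounds on $\phi_2(n, p, \delta np)$, in the spirit of Theorem~\ref{thm:ev1}.

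For the \emph{upper bound} (feasibility), I would plant a clique: take an integer $k = (1+o(1))\delta np$ large enough that $(1-p)k - 1 \geq \delta np$ and a vertex subset $S$ of size $k$, and let $G^\star \in \sG_n$ have $a_{ij} = 1$ for $\{i,j\} \subset S$ and $a_{ij} = p$ otherwise. A direct computation gives $A(G^\star) - p\bm 1 \bm 1' = (1-p)(\bm 1_S \bm 1_S' - I_S) - pI_n$, whose largest eigenvalue in modulus equals $(1-p)k - 1 \geq \delta np$. Since only the $\binom{k}{2}$ edges inside $S$ deviate from $p$, $I_p(G^\star) = \binom{k}{2}\log(1/p) = (1+o(1))\tfrac12 \delta^2 n^2 p^2 \log(1/p)$, giving the upper bound.

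For the \emph{lower bound}, I would follow the Schatten-norm strategy of Theorem~\ref{thm:ev1}. For any $G \in \sG_n$ with $||A(G) - p\bm 1 \bm 1'||_{\mathrm{op}} \geq \delta np$ and any even integer $s$,
\[
(\delta np)^s \;\leq\; \operatorname{tr}\bigl((A(G) - p\bm 1 \bm 1')^s\bigr) \;=\; \sum_{(i_1, \ldots, i_s) \in [n]^s}\ \prod_{j=1}^s (a_{i_j i_{j+1}} - p),
\]
with indices taken cyclically. Letting $s = s(n) \to \infty$ slowly, walks with repeated vertices contribute at lower order, so the dominant constraint involves a signed $C_s$-homomorphism sum. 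Expanding $\prod_j(a_{i_j i_{j+1}} - p)$ over subsets of cycle edges and reorganizing, one recovers a lower bound on the ordinary $C_s$-homomorphism count of $G$, to which the variational analysis of \cite{BGLZ} applies; sending $s \to \infty$ after $n \to \infty$ matches the clique rate $\tfrac12\delta^2 n^2 p^2 \log(1/p)$. Notably the competing hub construction is strictly sub-optimal here: after centering, a hub of $k$ vertices contributes an eigenvalue of order $\sqrt{nk}$ (rather than $np + k$ as in Theorem~\ref{thm:ev1}), so forcing $\sqrt{nk} \geq \delta np$ demands $k = \delta^2 np^2$ at entropy cost $\delta^2 n^2 p^2 \log(1/p)$, exactly twice the clique cost; thus the clique uniformly wins and the ``$\min$'' of Theorem~\ref{thm:ev1} collapses to a single term.

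The principal technical obstacle is the last step of the lower bound. Since $A - p\bm 1 \bm 1'$ has entries of mixed sign in $[-p, 1-p]$, the expansion of $\operatorname{tr}((A - p\bm 1\bm 1')^s)$ mixes positive and negative contributions, so extracting a clean $C_s$-count inequality without losing the sharp constant $\tfrac12$ requires careful bookkeeping over walk types and cancellations, together with quantitative control on the rate at which $s$ can grow with $n$ so that the truncation of non-cycle walks remains genuinely lower-order.
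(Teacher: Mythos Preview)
Your reduction of \eqref{eq:ev1_centered} to \eqref{eq:ev1_centered_var} via Theorem~\ref{thm:center_ldp} and your clique construction for the upper bound are both correct and essentially identical to what the paper does. Your heuristic for why the anti-clique is suboptimal after centering is also right.

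The lower bound argument, however, has a genuine gap. Your plan is to expand $\prod_j(a_{i_ji_{j+1}}-p)$ over subsets of cycle edges and ``recover a lower bound on the ordinary $C_s$-homomorphism count''. But that expansion is an alternating sum $\sum_{F\subseteq E(C_s)}(-p)^{s-|F|}\prod_{e\in F}a_e$, and there is no monotone way to drop the negative terms while keeping the sharp constant $\tfrac12$; the cancellations are exactly what you identified as the ``principal technical obstacle'', and you have not indicated any mechanism for resolving them. Letting $s\to\infty$ with $n$ does not help with this and in fact introduces its own difficulties (uniformity in $s$ of the variational analysis).

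The paper sidesteps the sign issue entirely, and the trick is worth knowing. Working at the level of the continuous variational problem, write $W=p+U$ with $-p\le U\le 1-p$. The elementary inequality $I_p(p-x)\ge I_p(p+x)$ for $0\le x\le p\le\tfrac12$ (a two-line convexity check) shows that replacing $U$ by $|U|$ can only \emph{decrease} the entropy, while it can only \emph{increase} the signed cycle density $t_+(C_s,p+U)=t(C_s,U)$. Hence the infimum over signed $U$ equals the infimum over nonnegative $U$, and one is back to the unsigned problem $t(C_s,U)\ge\delta^sp^s$. No expansion or cancellation bookkeeping is needed.

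Two further remarks. First, unlike in Theorem~\ref{thm:ev1}, the paper does \emph{not} send $s\to\infty$: once signs are gone, the degree-thresholding analysis of \cite{BGLZ} yields $2\theta_b^{s/2}+\eta_b^{s/2}\ge 1-o(1)$, and the convex minimization $\min\{x+\tfrac12 y:2x^{s/2}+y^{s/2}\ge1\}=\tfrac12$ is attained at $x=0$ for every fixed even $s\ge2$, giving $\tfrac12\delta^2p^2\log(1/p)$ directly. Second, because the statement allows $\delta\to0$ (any $\delta=O(1)$ with $\delta np\gg\sqrt n$), the BGLZ estimates (Lemma~4.2 there) must be reproved with explicit $\delta$-dependence; the paper carries this out separately.
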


The proof of the theorem is presented in Section \ref{sec:pfev1_centered}. 
For the lower bound on  $\phi_2(n, p, \delta n p)$, we bound the operator norm $||A(\cG_{n, p})-p \bm 1 \bm 1'||_{\mathrm{op}}$ with the Frobenius norm, which can then be used to approximate the entropy function, when $p \ll 1$. In this case, the matching upper bound is attained by planting in $\cG_{n, p}$ a {\it clique} of the required size.

\begin{remark} Note that, unlike in Theorem \ref{thm:ev1}, in Theorem \ref{thm:ev1_centered} above and Corollary \ref{cor:ev2} below, $\delta=\delta_n$ is allowed to go to zero with $n$. In fact, the condition on $\delta$ inherited from Theorem \ref{secondeigcookdembo} implies $ n^{\frac{1}{2}} \ll \delta np \lesssim np$, covering all possible deviations of the operator norm from $\sqrt{n}$ to $O(np)$. 
\end{remark}

Finally, we obtain as a corollary the upper tail rate function for $\lambda_2(\cG_{n, p})$ in a certain regime. The probability upper bound relies on eigenvalue interlacing and the previous result while the lower bound relies on a well known variational representation of the second largest eigenvalue.

\begin{cor}\label{cor:ev2}{For $n^{-\frac{1}{2}} \ll p \ll 1$ and $\delta =\delta_n < 1$ such that $\delta n p \gg n^{\frac{1}{2}}$}, 
\begin{align}\label{eq:ev2}
\lim_{n \rightarrow \infty}\frac{-\log \P(\lambda_2(\cG_{n,p})\ge  \delta np)}{\tfrac{1}{2} \delta^2n^2p^2 \log (1/p)}= 1. 
\end{align}
\end{cor}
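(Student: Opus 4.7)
The plan is to prove matching upper and lower bounds on $\P(\lambda_2(\cG_{n,p}) \geq \delta np)$, following the hint in the statement.

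\textbf{Upper bound on the probability.} Write $A = A(\cG_{n,p})$ and $M = A - p\bm 1 \bm 1'$. Since $p\bm 1 \bm 1'$ is rank-one with sole nonzero eigenvalue $np$, Weyl's inequality yields
\[
\lambda_2(A) \leq \lambda_1(M) + \lambda_2(p\bm 1 \bm 1') = \lambda_1(M) \leq ||M||_{\mathrm{op}},
\]
so $\{\lambda_2(A) \geq \delta np\} \subseteq \{||M||_{\mathrm{op}} \geq \delta np\}$, and Theorem \ref{thm:ev1_centered} gives
\[
-\log \P(\lambda_2(A) \geq \delta np) \;\geq\; -\log \P(||M||_{\mathrm{op}} \geq \delta np) \;=\; (1+o(1))\,\tfrac{1}{2}\delta^2 n^2 p^2 \log(1/p).
\]

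\textbf{Lower bound on the probability.} I would plant a clique on a fixed vertex subset $S$ of size $k := \lceil(1+\varepsilon_n)\delta np\rceil$, with $\varepsilon_n \downarrow 0$ chosen later. Let $\mathcal E$ be the event that every pair in $S$ is an edge of $\cG_{n,p}$; by edge independence $\P(\mathcal E) = p^{\binom{k}{2}}$, and conditional on $\mathcal E$ the remaining entries of $A$ are i.i.d.\ Bernoulli$(p)$. Apply the Courant-Fischer representation
\[
\lambda_2(A) \;=\; \max_{V:\,\dim V = 2}\; \min_{x \in V,\,||x||=1} x^{\top} A x,
\]
choosing $V = \mathrm{span}(\bm 1_{S^c}, \bm 1_S)$ with orthonormal basis $y_1 = \bm 1_{S^c}/\sqrt{n-k}$, $y_2 = \bm 1_S/\sqrt{k}$. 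This bounds $\lambda_2(A)$ below by the smaller eigenvalue of the $2\times 2$ block with entries $a := y_1^{\top}A y_1$, $b := y_2^{\top}A y_2$, and $c := y_1^{\top}A y_2$. Elementary Chebyshev/Bernstein concentration for sums of Bernoulli$(p)$'s gives, on $\mathcal E$ and with probability $1-o(1)$,
\[
a = (1+o(1))\, np, \qquad b = k-1 = (1+o(1))(1+\varepsilon_n)\delta np, \qquad |c| \;\lesssim\; \sqrt{kn}\,p \;=\; np\sqrt{\delta p} \;=\; o(np),
\]
using $p \to 0$ and $k = O(np)$. The smaller eigenvalue of the block equals $\tfrac{a+b}{2} - \sqrt{\tfrac{(a-b)^2}{4} + c^2}$; since $\delta < 1$ forces $b < a$ once $\varepsilon_n$ is small, this evaluates to $b - o(np) \geq \delta np$ eventually. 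Hence $\P(\lambda_2(A) \geq \delta np) \geq (1-o(1))\,p^{\binom{k}{2}}$, and after taking logarithms and letting $\varepsilon_n \to 0$ slowly one obtains the matching bound
\[
-\log\P(\lambda_2(A) \geq \delta np) \;\leq\; (1+o(1))\,\tfrac{1}{2}\delta^2 n^2 p^2 \log(1/p).
\]

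The main technical point is the quantitative control of the three scalars $a, b, c$ at additive scale $o(np)$, so that the planted eigenvalue $\approx b$ carries over to $\lambda_2(A)$. The hypothesis $\delta < 1$ is essential for the planting strategy: it is precisely what forces the planted direction $\bm 1_S$ to be the \emph{smaller} eigenvector of the $2 \times 2$ block (with the larger eigenvector given by the all-ones direction $\bm 1_{S^c}$ at value $\approx np$), so that the clique contributes to $\lambda_2$ rather than being absorbed into $\lambda_1$.
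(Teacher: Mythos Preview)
Your proof is correct and follows essentially the same approach as the paper: the upper bound via Weyl's inequality and Theorem~\ref{thm:ev1_centered} is identical, and for the lower bound both plant a clique and invoke the Courant--Fischer min-max over the same two-dimensional subspace $\mathrm{span}(\bm 1_S,\bm 1_{S^c})=\mathrm{span}(\bm 1,\bm 1_S)$. Your explicit computation of the full $2\times 2$ block (including the off-diagonal $c$) and use of concentration to get conditional probability $1-o(1)$ is in fact a slightly cleaner execution than the paper's, which works with the non-orthogonal pair $(\bm 1,\bm v)$, orthogonalizes, and settles for conditional probability $\geq \tfrac{1}{8}$ via Markov-type estimates.
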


Observe that the above result is stated only for $\delta<1$. The following remark gives a  brief, high level discussion of the reason for such a restriction.  

\begin{remark}
Note that typically $\lambda_1(\cG_{n,p}) \approx np$ and the corresponding Frobenius eigenvector is `approximately' the constant vector $\bf{1}$, while  $\lambda_2(\cG_{n,p})\approx \sqrt{np}$.  A key step involved in our analysis of the large deviation properties of the latter, is to project the matrix $A(\cG_{n, p})$ on to the orthogonal complement of $\bf{1}$, and by Weyl's inequality reducing the large deviation problem of  $\lambda_2(\cG_{n,p})$ to a large deviation problem of the largest eigenvalue of the projected matrix, and thereafter relying on Theorem \ref{thm:ev1_centered}. However, such a reduction is tight only if $\bf{1}$ is an approximate Frobenius eigenvector even after forcing $\lambda_2(\cG_{n,p})\ge \delta np.$ Not surprisingly, it turns out that the latter constraint is approximately the same as forcing $\lambda_2\approx \delta np$, while trivially this also implies $\lambda_1(\cG_{n,p}) \ge \delta np.$ Thus, the condition $\delta< 1$ ensures that the atypical behavior for $\lambda_2(\cG_{n,p}),$ does not cause $\lambda_1(\cG_{n,p})$ and its leading eigenvector to behave atypically (they are still approximately $np$ and $\bf{1}$, respectively), making the above proof strategy yield tight results. The technical details, presented in Section \ref{sec:pfev2}, and also the discussion on open problems in Section \ref{sec:openproblems} provide further elucidation. 
\end{remark}

\section{Proof of Theorem \ref{thm:ev1}}
\label{sec:pfe1}

By Theorem \ref{thm:e1_ldp}, to show \eqref{eq:ev1}, it suffices to prove 
\begin{align}\label{eq:ev1_variational}
\lim_{n \rightarrow \infty}\frac{\phi_1(n, p, 1+\delta)}{n^{2}p^{2}\log (1/p)}=
\min\left\{\frac{(1+\delta)^2}{2}, \delta(1+\delta) \right\}.
\end{align}
To see this, note that formally, to use Theorem \ref{thm:e1_ldp}, one has to analyze $\tfrac{\phi_1(n, p, (1+o(1))1+\delta)}{n^{2}p^{2}\log (1/p)}$. However, a simple monotonicity argument shows why \eqref{eq:ev1_variational} suffices: To  this end, denote $c(\delta):=\min\{\tfrac{1}{2}(1+\delta)^2, \delta(1+\delta) \}$. Now, fix $\delta_1< \delta < \delta_2$, and take $n$ large enough such that $1+\delta_1< (1+o(1)) (1+\delta) < 1+\delta_2$.  Observe that, by definition \eqref{eq:disvare1}, $\phi_1(n, p, \theta)$ is monotone in $\theta$, which implies 
$$\phi_1(n, p, (1+\delta_1)) \leq \phi_1(n, p, (1+o(1))(1+\delta)) \leq \phi_1(n, p, (1+\delta_2)).$$ Therefore, taking limit as $n \rightarrow \infty$ and \eqref{eq:ev1_variational} gives, 
$$c(\delta_1) \leq \liminf_{n \rightarrow \infty}\frac{ \phi_1(n, p, (1+o(1))(1+\delta))}{n^{2}p^{2}\log (1/p)}\le \limsup_{n \rightarrow \infty}\frac{\phi_1(n, p, (1+o(1))(1+\delta))}{n^{2}p^{2}\log (1/p)}\leq  c(\delta_2).$$
Since $\delta_1$ and $\delta_2$ can be chosen to be arbitrarily close of $\delta$, and the function $c(\cdot)$ is continuous, we get
$$\lim_{n \rightarrow \infty}\frac{\phi_1(n, p, (1+o(1))(1+\delta))}{n^{2}p^{2}\log (1/p)}=
c(\delta),$$
as required.

The proof of the lower bound, which involves bounding the largest eigenvalue with the  homomorphism density of cycles, is given below in Section \ref{sec:lb_e1}. The upper bound constructions are presented in Section \ref{sec:ub_e1}. 

\subsection{Proof of the Lower Bound in \eqref{eq:ev1_variational}}
\label{sec:lb_e1}

For any fixed graph $H$ on $[k]:=\{1, 2, \ldots, k\}$ vertices and $G_n \in \cG_n$, 
\begin{equation}\label{homonotiso}
t(H, G_n):= \frac{1}{n^{|V(H)|}} \sum_{1\leq i_1,\cdots,i_k\leq n}\prod_{(x, y)\in E(H)}a_{i_x i_y}
\end{equation}
will denote the homomorphism density of $H$ in $G_n$. The negative logarithm of the upper tail large deviation probability for $t(H, \cG_{n, p})$, that is $-\log \P(t(H, \cG_{n, p}) \geq \theta n^s p^s)$
 is given by the following variational problem (see \cite[Corollary 1.6]{CD18} and \cite{BGLZ} for details):  
\begin{equation}
\phi(C_s, n, p, \theta):= \inf\left\{I_p(G_n) : G_n\in \sG_n \text{ with } t(C_s, \cG_{n, p}) \geq \theta n^s p^s\right\},
\label{eq:disvar}
\end{equation}
for $\theta >1$ and certain sparsity conditions on $p$. The solution of this variational problem in the sparse regime was established in \cite{BGLZ}, which we state below for the special case of cycles. To this end, for $s \geq 1$, let $C_s$ denote the cycle of length $s$. 

\begin{thm}\label{thm:discrete-var}\cite{BGLZ}
For any $\theta >1$ and $n^{-\frac{1}{2}} \ll p \ll 1$, the solution to the variational problem~\eqref{eq:disvar} satisfies
\[\lim_{n \to \infty} \frac{\phi(C_s,n, p , \theta)}{n^2p^2\log(1/p)}= \min\left\{\gamma\,,\tfrac12 (\theta-1)^{\frac{2}{s}}\right\}, 
\]
where $\gamma=\gamma(C_s, t)$ is the unique positive solution to $P_{C_s}(\gamma)=t$, with $P_{C_s}(\cdot)$ is the independence polynomial of $C_s$.\footnote{The \emph{independence polynomial} of a graph $H$ is defined to be $P_H(x) := \sum_k i_H(k) x^k$, where $i_H(k)$ is the number of $k$-element independent sets in $H$.}
\end{thm}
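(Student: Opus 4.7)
My plan is to establish the identity in two matching directions: an upper bound on $\phi(C_s, n, p, t)$ (by exhibiting feasible weighted graphs) and a lower bound (by structural analysis of any feasible $G_n$).

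For the upper bound, I would produce two families of test configurations, each realizing one of the two terms inside the minimum. The \emph{clique construction} sets $a_{ij} = 1$ on the edges of a clique on $k \sim c_s (t-1)^{1/s}\, np$ vertices (for an appropriate constant $c_s$) and $a_{ij} = p$ elsewhere; the additional $s$-cycles living entirely in the clique contribute a density increment of order $k^s/n^s \sim (t-1) p^s$, while the entropic cost is $\binom{k}{2}\log(1/p) \sim \tfrac{1}{2}(t-1)^{2/s} n^2p^2\log(1/p)$. The \emph{hub construction} sets $a_{ij} = 1$ whenever $i$ lies in a small vertex set $T$, and leaves $a_{ij} = p$ otherwise. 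The extra $s$-cycles now arise from closed walks of length $s$ that alternate between hub edges (fully saturated) and background edges (of weight $p$), and the admissible alternations within a single copy of $C_s$ are indexed by the \emph{independent edge-subsets} of $C_s$. Optimizing $|T|$ against the entropic cost then produces the defining identity $P_{C_s}(\theta) = t$, using the fact (specific to cycles) that the matching polynomial of $C_s$ coincides with the independence polynomial of $C_s$.

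For the lower bound, I would start from an arbitrary feasible $G_n$ and write $a_{ij} = p + x_{ij}$ with $x_{ij} \in [-p, 1-p]$. Expanding the cycle density,
\[
t(C_s, G_n) = \sum_{F \subseteq E(C_s)} \frac{p^{s-|F|}}{n^s} \sum_{i_1, \ldots, i_s} \prod_{(u,v) \in F} x_{i_u i_v},
\]
I would argue that at the logarithmic scale only the terms indexed by \emph{matchings} $F$ of $C_s$ can be dominant contributors. Each such matching term factorizes along its components, which reduces the infimum to a finite-dimensional optimization whose Lagrangian stationarity condition is exactly $P_{C_s}(\theta) = t$. The clique term of the minimum corresponds to a boundary regime in which many $x_{ij}$'s saturate the upper constraint $x_{ij} = 1-p$, whereas the hub/polynomial term corresponds to the interior regime where $|x_{ij}| \ll 1$ for most edges, and a direct convexity bound on $I_p$ turns the factorized objective into the two entropic costs above.

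The chief obstacle is making the reduction to matching-indexed terms rigorous and lossless at the exponential scale: one must rule out the possibility that a diffuse perturbation with many moderately sized $x_{ij}$'s could outperform both extremal structures. This is where I would need a stability/approximation argument in the spirit of the graph-container and regularity techniques used in \cite{LZ-sparse, BGLZ}, combined with careful control of the cross-terms in the cycle expansion when $F$ is not a matching (so that consecutive $x_{ij}$'s share a vertex and the inner sum gains additional $n^{-1}$ factors). The hypothesis $p \gg n^{-1/2}$ enters precisely to guarantee that these off-matching contributions remain negligible after taking logarithms, so that the mean-field picture dictated by the independence polynomial is sharp.
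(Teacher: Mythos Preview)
The paper does not prove this theorem: it is quoted verbatim from \cite{BGLZ} and used as a black box. There is therefore no ``paper's own proof'' to compare against directly. That said, Section~\ref{sec:lb_centered} of the paper reproduces the \cite{BGLZ} machinery for the centered variant, so one can see what the intended argument looks like, and it is quite different from your lower-bound plan.

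Your upper-bound constructions (clique and hub/anti-clique) are the correct ones and match what is done in \cite{BGLZ} and in Section~\ref{sec:ub_e1} here. The observation that for $C_s$ the matching polynomial coincides with the independence polynomial (via $L(C_s)=C_s$) is a nice way to see why the latter appears, though in \cite{BGLZ} the independence polynomial enters more directly by classifying which \emph{vertices} of the embedded cycle land in the hub set.

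Your lower-bound strategy has a genuine gap. The claim that ``only the terms indexed by matchings $F$ of $C_s$ can be dominant contributors'' in the expansion $t(C_s,G_n)=\sum_{F\subseteq E(C_s)}p^{s-|F|}n^{-s}\sum\prod_F x_{i_ui_v}$ is false exactly in the regime you need it most. In the clique-planting configuration the matching terms are all $o(p^s)$, and it is the full term $F=E(C_s)$ (which is not a matching for $s\ge 3$) that carries the entire excess $(t-1)p^s$. So one cannot first reduce to matchings and then treat the clique as a separate ``boundary'' case: the expansion does not isolate the two mechanisms the way you describe. The actual \cite{BGLZ} argument, visible in this paper's Lemmas~\ref{lem:apriori} and~\ref{lm:cycle_adaptive}, proceeds instead by an \emph{adaptive degree-thresholding}: one partitions $[0,1]$ into a high-degree set $B_b$ and its complement, uses an iterative choice of $b$ to show that the only non-negligible contributions to $t(C_s,\cdot)$ come from embeddings that either place all vertices in $\overline{B_b}$ or alternate between $B_b$ and $\overline{B_b}$, and then bounds each contribution via the generalized H\"older inequality (Theorem~\ref{thm:holder}). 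The two-term minimum then falls out of a convexity argument (Lemma~\ref{lem:convarg}). The hypothesis $p\gg n^{-1/2}$ plays no role in this lower bound; it is needed only to validate the mean-field reduction, not the variational analysis itself.
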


Let $G_n \in \sG_n$ be a weighted graph satisfying  $\lambda_1(G_n) \geq (1+\delta) np$. We can now use the above result to obtain a lower bound on \eqref{eq:ev1_variational}, using the following simple inequality: For  $s \geq 1$ even, 
$$t(C_s, G_n) = \frac{1}{n^s} \sum_{j=1}^n \lambda_j^s(G_n) \geq \left(\frac{\lambda_1(G_n)}{n}\right)^s \geq  (1+\delta)^s p^s.$$ 
This implies, 
\begin{align*}
\phi_1(n, p, 1+\delta) & \geq \phi(C_s, n, p, (1+\delta)^s). 
\end{align*} 
Therefore, by Theorem \ref{thm:discrete-var}, it follows that, for any $s$ even, 
\begin{align}\label{eq:lb_I}
\liminf_{n \rightarrow \infty} \frac{\phi_1(n, p, 1+\delta)}{n^2p^2\log(1/p)} \geq 
\liminf_{n \rightarrow \infty} \frac{\phi(C_s, n, p, (1+\delta)^s)}{n^2p^2\log(1/p)} & = \min\left\{\bar \gamma, \dfrac{1}{2}\left((1+\delta)^s-1\right)^{\frac{2}{s}}\right\}. 
\end{align}
where $\bar \gamma=\bar \gamma(C_s, \delta)$ is the unique positive solution to $P_{C_s}(\bar \gamma)=(1+\delta)^s$, with $P_{C_s}(\cdot)$ as defined in \eqref{eq-P_Cs}.

Therefore, to complete the proof of the lower bound in \eqref{eq:ev1_variational}, it suffices to understand the asymptotics of \eqref{eq:lb_I}, as $s \rightarrow \infty$. In turns out that the independence polynomial for cycles can be calculated in closed form in terms of Chebyshev polynomials, using the recursion\footnote{By the definition of the independence polynomial, for any graph $H$ and vertex $v$ in it, $P_{H}(x)= P_{H_1}(x) + x P_{H_2}(x)$, where $H_1$ is obtained from $H$ by deleting $v$ and $H_2$ is obtained from $H$ by deleting $v$ and all its neighbors.}
\[ P_{C_s}(x) = P_{C_{s-1}}(x) + x P_{C_{s-2}}(x)\,,\quad P_{C_2}(x) = 2x+1 \,,\quad P_{C_3}(x) = 3x+1\,.\] 
Then using the closely related recursion for Chebyshev polynomials gives, 
$$P_{C_s}(x)  =\frac{1}{2^{s-1}}\sum_{a=0}^{\lfloor \frac{s}{2}\rfloor} \binom{s}{2 a} (1+4x)^a,$$ which simplifies to 
\begin{equation}\label{eq-P_Cs}
 P_{C_s}(x)=\left[\frac12(\sqrt{1+4x}+1)\right]^s+\left[\frac12(\sqrt{1+4x}-1)\right]^s.
\end{equation}

Using this the equation $P_{C_s}(\bar \gamma)=(1+\delta)^s$ can be written as: 
\begin{align*}
\left(1+\sqrt{1+4 \bar \gamma}\right)^s \left[1+\left(\frac{\sqrt{1+4 \bar \gamma}-1}{\sqrt{1+4 \bar \gamma}+1}\right)^s\right]=(2(1+\delta))^s.
\end{align*}
This shows $\eta:=\lim_{s\rightarrow \infty} \bar \gamma(C_s, \delta)$ must satisfy the equation $1+\sqrt{1+4 \eta}=2(1+\delta)$, which solves to $\eta=\delta(1+\delta)$. Therefore, taking limit as $s \rightarrow \infty$, in \eqref{eq:lb_I} gives 
\begin{align}\label{eq:ub}
\liminf_{n \rightarrow \infty} \frac{\phi_1(n, p, 1+\delta)}{n^2p^2\log(1/p)}   & \geq \min\left\{\frac{(1+\delta)^2}{2}, \delta(1+\delta) \right\},
\end{align}
using $\lim_{s\rightarrow \infty}\left((1+\delta)^s-1\right)^{\frac{2}{s}}=(1+\delta)^2$. This completes the proof of the lower bound. 

\subsection{Proof of the Upper Bound in \eqref{eq:ev1_variational}} 
\label{sec:ub_e1}

The proof of the upper bound proceeds by verifying that the minimum entropy configurations are attained by planting a {\it clique} or an {\it anti-clique} of the required size in the Erd\H os-R\'enyi graph. \\ 

\noindent{{\it The Clique Construction}:} For $b\geq 1$, denote by  $G_{\mathrm{cl}}(b)$ the {\it clique graph}, with adjacency matrix $A(G_{\mathrm{cl}}(b))=((a_{ij}))$, which has zeros on the diagonal and 
\begin{align}\label{cliquecons}
a_{ij}=
\left\{
\begin{array}{ccc}
1  &  \text{ if } 1 \leq i \ne j \leq \lceil b\rceil+1 \\
p  &   \text{otherwise.} 
\end{array}
\right.
\end{align}
(Note that this corresponds to planting a clique on the set of vertices $\{1,2,3, \ldots, \lceil b\rceil+1\}$ in the random graph $\cG_{n, p}$.) Next, define  
\begin{equation}
\label{eq:evcandidate}
\bm v=\left(\underset{ \lceil b\rceil +1 \text{ times}}{\underbrace{\frac{1}{ \sqrt{\lceil b\rceil +1}},\ldots, \frac{1}{\sqrt{\lceil b\rceil+1}}}}, \underset{ n-\lceil b\rceil - 1 \text{ times}}{\underbrace{0,\ldots,0}}\right)',
\end{equation}
Now, let $z:=1+\delta$, and choosing $b=znp$ gives, (since $\bm v' \bm v =1$) 
\begin{align}\label{eq:ev_clique}
\lambda_1(G_{\mathrm{cl}}(znp)) \geq \bm v'A(G_{\mathrm{cl}}(znp)) \bm v & =\sum_{1 \leq i \ne j \leq \lceil z np\rceil +1} a_{ij}v_iv_j  \nonumber \\ 
& =\frac{1}{\lceil z np\rceil +1}\sum_{1 \leq i \ne j \leq \lceil z np\rceil +1} a_{ij} \nonumber \\ 
& =\lceil z np\rceil \geq z np =(1+\delta) np. 
\end{align} 
This shows, 
\begin{align}\label{eq:ub_I}
\phi_1(n, p, 1+\delta) \leq I_p(G_{\mathrm{cl}}(znp)) \leq (1+o(1))  \tfrac{1}{2}(1+\delta)^2 n^2 p^2 \log(1/p).
\end{align}\\

\noindent{{\it The Anti-clique Construction}:} Given $z \geq 0$, $G_{\mathrm{acl}}(z)$ be weighted graph which has an anti-clique on the vertices $\{1,2,3, \ldots, \lfloor znp^2 \rfloor\}$ and weight $p$ between every other pair of vertices, that is, $A(G_{\mathrm{acl}}(z))=((a_{ij}))$, which has zeros on the diagonal and 
$$a_{ij}=
\left\{
\begin{array}{ccc}
1  &  \text{ if } 1 \leq i \leq \lfloor znp^2 \rfloor \text{ and } 1\leq j \leq n \\
p  &   \text{otherwise.} 
\end{array}
\right.$$
Now, fix $z$ to be chosen later. Choose $$\bm v = K \left(\underset{ \lfloor znp^2 \rfloor \text{ times}}{\underbrace{1, 1, \ldots, 1}},\underset{ n-\lfloor znp^2 \rfloor \text{ times}}{\underbrace{(1+\delta) p, (1+\delta) p,  \ldots, (1+\delta) p}}\right)',$$
where $K=\frac{1}{\sqrt{  \lfloor znp^2 \rfloor +(n-\lfloor znp^2 \rfloor) (1+\delta)^2 p^2}}$. This ensures $\bm v' \bm v=1$. Then 
\begin{align}\label{eq:lambda1_ac}
\lambda_1(G_{\mathrm{acl}}(z)) & \geq \bm v'A(G_{\mathrm{acl}}(z)) \bm v \\ 
& \geq  2 K^2 (1+\delta) p \lfloor znp^2 \rfloor (n-\lfloor znp^2 \rfloor) +  K^2 (1+\delta)^2 p^3 (n-\lfloor znp^2 \rfloor)(n-\lfloor znp^2 \rfloor-1) \nonumber \\
& = K^2 n^2p^3\left(  2  (1+\delta)  z  (1-  zp^2 ) +  (1+\delta)^2  (1- zp^2)^2 \right) +o(np)\nonumber \\ 
& =  (1+\delta)  np \left[\frac{   2 z  (1-  zp^2 ) +  (1+\delta) (1- zp^2)^2}{z+(1+\delta)^2(1-zp^2)} \right] +o(np). 
\end{align} 
Note that if we set $z=\delta(1+\delta)$ then, since $zp^2 \ll 1$, $\frac{   2 z  (1-  zp^2 ) +  (1+\delta) (1- zp^2)^2}{z+(1+\delta)^2(1-zp^2)}=1+o(1)$, and the RHS of \eqref{eq:lambda1_ac} becomes $np (1+\delta) + o(np)$. Thus by choosing a slightly larger $z=(1+o(1))\delta(1+\delta)$, we get, 
$$\lambda_1(G_{\mathrm{acl}}(z)) \geq (1+\delta)  np.$$ This shows 
\begin{align}\label{eq:ub_II}
\phi_1(n, p, 1+\delta) \leq I_p(G_{\mathrm{acl}}(z)) \leq (1+o(1)) \delta(1+\delta) n^2 p^2 \log(1/p).
\end{align}

Combining \eqref{eq:ub_I} and \eqref{eq:ub_II} gives the desired upper bound $$\phi_1(n, p, 1+\delta)  \leq (1+o(1))\min\left\{\frac{(1+\delta)^2}{2}, \delta(1+\delta) \right\} n^2p^2\log(1/p),$$ completing the proof.

\begin{remark} The mean-field approximation to the upper tail probability for $\lambda_1(\cG_{n, p})$, as in Theorem \ref{thm:e1_ldp}, is expected to hold for $p \gg \frac{\log^\alpha n}{n}$, for some $\alpha >0$.  Therefore, in anticipation of this result, it makes sense to ask what happens to the variational problem $\phi_1(n, p,1+\delta)$, for $n^{-1} \ll p \ll n^{-\frac{1}{2}}$. However, unfortunately, the sparsity of the graph renders the cycle homomorphism density irrelevant, because, in this regime, it is determined by the smaller eigenvalues and is no longer related to the largeness of $\lambda_1(\cG_{n, p})$. This is reflected in the fact that in \eqref{homonotiso}, in the said sparsity regime, $t(C_s,\cG_{n,p})$ is much larger than the number of injective homomorphisms of $C_s$ into $\cG_{n,p}$ (where the sum in \eqref{homonotiso} is restricted to only distinct indices). It is worth pointing out that although not directly related to spectral information, current technology can be used to establish (see \cite{BGLZ}), that for any $\theta >1$ and $n^{-1} \ll p \ll n^{-\frac{1}{2}}$,
$$\tilde \phi(C_s,n, p , \theta)= (1+o(1)) \tfrac12 (\theta-1)^{\frac{2}{s}} n^2p^2\log(1/p),$$ where $\tilde \phi$ is the analogous variational problem to \eqref{eq:disvar} for the injective homomorphism. Nonetheless, the analog of Theorem \ref{thm:ev1} in the regime $n^{-1} \ll p \ll n^{-\frac{1}{2}}$, continues to remain open. Unlike cliques and anti-cliques, we expect that structures such as a single high degree vertex will govern the large deviation behavior of $\lambda_1(\cG_{n, p})$ in this case.  
\end{remark}

\section{Proof of Theorem \ref{thm:ev1_centered}}
\label{sec:pfev1_centered}

Note that it suffices to prove \eqref{eq:ev1_centered_var}. The proof of the upper bound is presented below in Section \ref{sec:ub_centered}, and the proof of the lower bound is given in Section \ref{sec:lb_centered}.

\subsection{Proof of the Upper Bound in \eqref{eq:ev1_centered_var}}
\label{sec:ub_centered}

 To establish an upper bound on \eqref{eq:ev1_centered_var}, it suffices to exhibit a weighted graph $G_n$ with $||A(G_n)-p \bm 1 \bm 1'||_{\mathrm{op}} \geq \delta np$, which has entropy $(1+o(1))  \frac{1}{2}\delta^2 n^2 p^2 \log(1/p)$. For this, consider the clique graph $G_{\mathrm{cl}}(s)$ with adjacency matrix $A(G_{\mathrm{cl}}(s))=((a_{ij}))$ as in \eqref{cliquecons}, with $s=\frac{\delta np + 2p}{1-p}=(1+o(1)) \delta np$ (since $\delta n \geq \delta np \rightarrow \infty$). This ensures, $\lceil s\rceil - p (\ceil{s}+1) \geq \delta np$. Then choosing  $\bm v$ as in \eqref{eq:evcandidate}, gives 
\begin{align*}
||A(G_n)-p \bm 1 \bm 1'||_{\mathrm{op}}  \geq \bm v'(A(G_n) -p \bm 1 \bm 1') \bm v &= \sum_{1 \leq i \ne j \leq \lceil s\rceil +1} a_{ij}(G_n)v_iv_j - p (\ceil{s}+1) \nonumber \\
& =\frac{1}{\lceil s\rceil +1}\sum_{1 \leq i \ne j \leq \lceil s\rceil +1} a_{ij}(G_n) -p (\ceil{s}+1) \nonumber \\
&=\lceil s\rceil - p (\ceil{s}+1) \geq \delta np.
\end{align*}
This shows (recall \eqref{eq:disvarlambda}), $\phi_2(n, p, \delta np) \leq I_p(G_n) \leq (1+o(1))  \frac{1}{2}\delta^2 n^2 p^2 \log(1/p)$.

\subsection{Proof of the Lower Bound in \eqref{eq:ev1_centered_var}} 
\label{sec:lb_centered}

For the lower bound, it is convenient to analyze a continuous version of the variational problem~\eqref{eq:disvar}. To describe the 
continuous analogue of the variational problem, it is convenient to invoke the language of graph limit theory~\cite{BCLSV08,BCLSV12,Lov12,LS06}. Define a \emph{signed graphon} as a  symmetric measurable function $W : [0, 1]^2\rightarrow [-1, 1]$ (where symmetric means $W(x,y) = W(y,x)$). Every graphon $W$ defines an operator $T_W: L_2[0, 1]\rightarrow L_2[0, 1]$, by 
\begin{eqnarray}
(T_Wf)(x)=\int_0^1W(x, y)f(y) \mathrm dy.
\label{eq:T}
\end{eqnarray}
$T_W$ is a Hilbert-Schmidt operator, which is compact and has a discrete spectrum, that is, a countable multiset of non-zero real eigenvalues. Denote by $||W||_{\mathrm{op}}$ the operator norm of $T_W$.

In the continuous variational problem, the edge-weighted graph $G_n$ of~\eqref{eq:disvar}, is replaced by a non-negative signed graphon $W$, that is, $W \in [0, 1]$, for all $x, y \in [0, 1]$ ($G_n$ should be viewed as a discrete approximation of a non-negative signed graphon.) Denote the space of all non-negative signed graphons (hereafter, simply referred to as graphons) by $\cW$. Finally, defining
$
\E[f(W)] := \int_{[0,1]^2} f(W(x,y))\, \mathrm d x \mathrm d y
$, we get the continuous analogue of \eqref{eq:disvarlambda}.

\begin{defn}[Graphon variational problem]
For $\theta>0$ and $0<p<1$, let
\begin{equation}
\phi_2(p, \theta):=\inf\Bigl\{\tfrac12 \E[I_p(W)] : \text{ $W \in \cW$ with } ||W-p J||_{\mathrm{op}} \geq \theta \Bigr\},\
\label{eq:continuous}
\end{equation}
where $J:=1$, is the constant graphon 1. 
\end{defn}

The following lemma shows how we can obtain a lower bound on the discrete variational problem using the graphon variational problem: 

\begin{lem} \label{lem:disccont} 
Under the assumptions of Theorem \ref{thm:ev1_centered}, $\phi_2(p, (1+o(1))\delta p)\le \frac{1}{n^2}\phi_2(n, p, \delta np)$.
\end{lem}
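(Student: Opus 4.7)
The plan is to construct a graphon from a minimizer of the discrete variational problem $\phi_2(n, p, \delta n p)$ and to verify that it is feasible for the continuous variational problem at a threshold differing from $\delta p$ by a vanishing relative amount. Let $G_n^* \in \sG_n$ be a minimizer of $\phi_2(n, p, \delta n p)$, with adjacency matrix $A(G_n^*) = (a_{ij})$, so that $\|A(G_n^*) - p \bm 1 \bm 1'\|_{\mathrm{op}} \geq \delta n p$ and $I_p(G_n^*) = \phi_2(n, p, \delta n p)$. Partition $[0,1]$ into intervals $I_1, \ldots, I_n$ of length $1/n$, and define the \emph{pixel graphon} $W_n^* : [0,1]^2 \to [0,1]$ by $W_n^*(x,y) = a_{ij}$ on $I_i \times I_j$ for $i \ne j$ and $W_n^*(x,y) = p$ on each diagonal square $I_i \times I_i$. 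The specific choice $W_n^* \equiv p$ on the diagonal is convenient because $I_p(p) = 0$, so the diagonal squares contribute nothing to $\E[I_p(W_n^*)]$ and
\[
\tfrac{1}{2} \E[I_p(W_n^*)] \;=\; \frac{1}{n^2}\sum_{i<j} I_p(a_{ij}) \;=\; \frac{1}{n^2}\,I_p(G_n^*) \;=\; \frac{1}{n^2}\,\phi_2(n, p, \delta n p).
\]

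The main step is to lower bound $\|W_n^* - pJ\|_{\mathrm{op}}$ in terms of $\|A(G_n^*) - p\bm 1 \bm 1'\|_{\mathrm{op}}$. For any symmetric $n \times n$ matrix $M = (m_{ij})$, the pixel graphon $\widehat M$ with $\widehat M(x,y) = m_{ij}$ on $I_i \times I_j$ has an integral operator $T_{\widehat M}$ that maps $L_2([0,1])$ into the $n$-dimensional subspace $V_n$ of pixel-constant functions and whose restriction to $V_n$ is conjugate to $\tfrac{1}{n} M$ via the natural identification of $V_n$ with $\mathbb{R}^n$. A short calculation using the facts that pixel vectors of unit $\ell_2$ norm have $L_2$-norm $1/\sqrt{n}$ and that components of $f \in L_2$ orthogonal to $V_n$ lie in the kernel of $T_{\widehat M}$ then gives $\|\widehat M\|_{\mathrm{op}} = \tfrac{1}{n}\|M\|_{\mathrm{op}}$. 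Applied to $M = A(G_n^*) - p\bm 1\bm 1' + p\bI$, which is exactly the symmetric matrix whose pixel graphon equals $W_n^* - pJ$ (setting the graphon diagonal to $p$ rather than $a_{ii}=0$ adds $p\bI$ on the matrix side), together with the triangle inequality and $\|p\bI\|_{\mathrm{op}} = p$, yields
\[
\|W_n^* - pJ\|_{\mathrm{op}} \;=\; \tfrac{1}{n}\bigl\|A(G_n^*) - p\bm 1\bm 1' + p\bI\bigr\|_{\mathrm{op}} \;\geq\; \tfrac{1}{n}\bigl(\delta n p - p\bigr) \;=\; \bigl(1 - \tfrac{1}{\delta n}\bigr)\,\delta p \;=\; (1-o(1))\,\delta p,
\]
where the final equality uses that $\delta n p \gg \sqrt{n}$ with $p \ll 1$ forces $\delta n \to \infty$.

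Combining the two displays, $W_n^*$ is feasible for the continuous variational problem at threshold $(1-o(1))\delta p$ with entropy exactly $\tfrac{1}{n^2}\phi_2(n, p, \delta n p)$, which gives the claimed bound $\phi_2(p, (1+o(1))\delta p) \leq \tfrac{1}{n^2}\phi_2(n, p, \delta n p)$. The only non-trivial step is the operator-norm identity for pixel graphons, which is a standard fact once one observes that $T_{\widehat M}$ preserves the pixel-constant subspace; the $p\bI$ correction coming from the choice of diagonal value is absorbed into the $(1+o(1))$ precisely under the hypothesis $\delta n p \gg n^{1/2}$.
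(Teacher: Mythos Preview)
Your proof is correct and follows essentially the same approach as the paper: construct the pixel graphon $W_n^*$ with value $p$ on the diagonal squares (the paper calls it $W^{G_n}$), verify that its entropy matches $\tfrac{1}{n^2}I_p(G_n)$, and use the identification of the pixel operator with $\tfrac{1}{n}$ times the associated matrix together with a triangle-inequality correction of size $p/n$ coming from the diagonal, which is $o(\delta p)$ since $\delta n \to \infty$. The only cosmetic difference is that the paper introduces an auxiliary graphon $\hat W^{G_n}$ (with zeros on the diagonal) and applies the triangle inequality on the graphon side, whereas you encode the same correction as the matrix perturbation $p\bI$ and apply the triangle inequality on the matrix side; the resulting bound $\|W_n^*-pJ\|_{\mathrm{op}}\ge \delta p - p/n$ is identical.
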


\begin{proof}
Given weighted graph $G_n \in \sG_n$ with adjacency matrix $(a_{ij})_{1\le i,j \le n}$, form two graphons $W^{G_n} $ and $\hat W^{G_n}$ as follows: Divide $[0,1]$ into $n$ equal-length intervals $I_1,I_2, \ldots, I_n$, that is, $I_i=(\frac{i-1}{n}, \frac{i}{n}]$, and set (1) $\hat W^{G_n}(x,y)=a_{ij}$ if $x\in I_i, y \in I_j$ and $i\neq j$, and $\hat W^{G_n}(x, y)=0$ if $x,y \in I_i$ for some $i$, and (2) $W^{G_n}(x,y)=a_{ij}$ if $x\in I_i, y \in I_j$ and $i\neq j$, and $W^{G_n}(x, y)=p$ if $x,y \in I_i$ for some $i$. 

Note that the operator norm of $\hat W^{G_n}-p J$ restricted to act on the set of step functions that are constant on the intervals $\{I_i\}_{i=1}^n$ is exactly $\frac{1}{n} ||A(\cG_{n,p})-p \bm 1 \bm 1'||_{\mathrm{op}}$. This implies, 
\begin{align}
 ||A(\cG_{n,p})-p \bm 1 \bm 1'||_{\mathrm{op}} & \le  n ||\hat W^{G_n}-p J||_{\mathrm{op}} \nonumber \\
& \leq n ||W^{G_n}-p J||_{\mathrm{op}}  + n ||\hat W^{G_n}- W^{G_n}||_{\mathrm{op}}  \nonumber \\
&\leq n ||W^{G_n}-p J||_{\mathrm{op}} + p, \nonumber 
\end{align}
where final inequality follows by observing that $\hat W^{G_n}- W^{G_n}$ is the graphon which is $p$ on $I_i\times I_i$, for $1 \leq i \leq n$ and zero otherwise, and checking that $||\hat W^{G_n}- W^{G_n}||_{\mathrm{op}}=p/n$. 

This shows, if $||A(\cG_{n,p})-p \bm 1 \bm 1'||_{\mathrm{op}}  \geq \delta np$, then $||W^{G_n}-p J||_{\mathrm{op}} \geq \delta p-p/n=(1+o(1)) \delta p$ (since $\delta n \geq \delta np \rightarrow \infty$). The result follows by noting that  $\frac{1}{2}I_{p}(W^{G_n})= \frac{1}{n^2} I_p(G_n)$ (diagonal entries contribute $0$ to $I_{p}(W^{G_n})$).
\end{proof}

We need another lemma which states that the infimum in \eqref{eq:continuous} can be restricted to graphons which are pointwise at least $p.$
\begin{lem}\label{lm:restrict23} Let $\phi_2(p, \theta)$ be as defined in \eqref{eq:continuous}. Then 
\begin{equation}
\phi_2(p, \theta) \geq \inf\Bigl\{\tfrac12 \E[I_p(W)] : \text{ $W \in \cW$ with } W\ge p \text{ and } ||W-p J||_{\mathrm{op}} \geq \theta \Bigr\}.\
\end{equation}
\end{lem}

\begin{proof} Let $W=p+U$, where $-p \leq U \leq 1-p$.  Denote by $\hat U:=|U|$ the graphon obtained by taking the absolute value of $U$. Then clearly from the definition \eqref{eq:T}, it follows that  $||\hat U||_{\mathrm{op}}\ge ||U||_{\mathrm{op}}$.  The proof is now complete by Lemma \ref{lm:Ip} which guarantees  $I_p(p+x) \geq I_p(p+|x|)$, for all $p\le 1/2$ and $p+x\ge0.$ 
\end{proof}

Given the above discussion, the proof of the lower bound in \eqref{eq:ev1_centered_var} is a straightforward consequence of the following lemma: 

\begin{lem}\label{lm:phi2} For $p\ll 1$ and $\delta=O(1)$, $\phi_2(p, \delta p) \geq (1+o(1))  \tfrac{1}{2}\delta^2 p^2 \log(1/p)$.
\end{lem}

\begin{proof} Suppose  $W \in \cW$ such that $||W-p J||_{\mathrm{op}} \geq \delta p$ and $W\ge p$. Then, by Corollary \ref{est3}, 
$$\delta^2 p^2  \le ||W-p J||_{\mathrm{op}}^2 \le   \int_0^1 \int_0^1 (W(x, y)-p)^2 \mathrm dx \mathrm d y \le (1+o(1))  \frac{1}{I_p(1)} \int_0^1 \int_0^1 I_p(W(x, y)) \mathrm dx \mathrm d y,$$
which in conjunction with Lemma \ref{lm:restrict23}, completes the proof.
\end{proof}

\section{Proof of Corollary \ref{cor:ev2}}
\label{sec:pfev2}

The upper bound in Corollary \ref{cor:ev2} is an immediate consequence of Theorem \ref{thm:ev1_centered} and the following simple consequence of Weyl's interlacing inequality (see \cite{taotopics}): $\lambda_2(A(\cG_{n,p}))\le ||A(\cG_{n, p})-p \bm 1 \bm 1'||_{\mathrm{op}}.$

{The lower bound follows by planting a clique of appropriate size in the random graph.} Given $b \geq 0$, let $\cC_b$ be the event that the vertices $\{1,2,3, \ldots, \lceil b\rceil+1\}$ forms a clique in $\cG_{n, p}$. Note that 
\begin{align}
\P(\lambda_2(\cG_{n, p}) \geq \delta np) & \geq \P((\lambda_2(\cG_{n, p}) \geq \delta np) \cap \cC_{\delta np}) \nonumber \\ 
&=\P(\lambda_2(\cG_{n, p}) \geq \delta np | \cC_{\delta np}) \cdot \P(\cC_{\delta np}) \nonumber \\
&=\frac{1}{2}e^{-(1+o(1))\frac{1}{2} \delta^2 n^2p^2 \log(1/p)}, \nonumber 
\end{align}
using $\P(\cC_{\delta np})=p^{\lceil \delta np \rceil +1 \choose 2}$ and the lemma below. This completes the proof of the lower bound.

\begin{lem} Under the assumption of Corollary \ref{cor:ev2}, $\P(\lambda_2(\cG_{n, p}) \geq \delta np | \cC_{\delta np})  \geq \frac{1}{2}$.
\end{lem}

\begin{proof}  Throughout the proof, $\bm v$ will denote the vector in \eqref{eq:evcandidate} with $b=\delta np$. Note that $\bm v' \bm 1 = (\ceil{b}+1)^{\frac{1}{2}}$, which means
\begin{align}\label{eq:vbar}
\overline{\bm v}: = \frac{(\ceil{b}+1)^{\frac{1}{2}}}{n} \cdot \bm 1
\end{align}  
is the projection of $\bm v$ on $\bm 1$, and $\hat {\bm v}={\bm v}-\overline{\bm v}$, which is a vector orthogonal to $\bm 1$. 
Next, recall the following well-known variational representation of the second eigenvalue of the adjacency matrix $A(\cG_{n, p})=((a_{ij}))$: 
\begin{align}\label{eq:eigenvalue2}
\lambda_2(\cG_{n, p})=\sup_{V:~\mathrm{dim}(V)=2} \left\{\inf _{\bm w\in V} \frac{{\bm w}'A(\cG_{n, p}){\bm w}}{{\bm w}'{\bm w}} \right\}.
\end{align} 
To prove the lemma, we will show that given $\cC_{\delta n p}$, the following hold with probability at least $\frac{49}{50}$: 
\begin{align}\label{conditions1}
{\bm 1}'A(\cG_{n, p}){\bm 1}  \ge (1+o(1))  n^2p, \quad  \hat {\bm v}'A(\cG_{n, p})\hat {\bm v}  &\ge \delta np (1+o(1)), 
\end{align} 
and
\begin{align}\label{condition2}
|{{\bm 1}'A(\cG_{n, p})\hat {\bm v}}|\le O(\delta^{\frac{1}{2}}(np)^{\frac{3}{2}}).
\end{align} 
To see why this suffices, choose $V$ to be the 2-dimensional subspace generated by $\bm 1$ and $\hat {\bm v}$ in \eqref{eq:eigenvalue2} and note that for $\bm w=\alpha {\bm 1}+ \beta \hat{\bm  v}$, 
\begin{align*}
\frac{{\bm w}'A(\cG_{n, p}){\bm w}}{{\bm w}'{\bm w}}&=\frac{\alpha^2 {\bm 1}'A(\cG_{n, p}){\bm 1}+\beta^2\hat {\bm v}'A(\cG_{n, p})\hat {\bm v}+2 \alpha \beta {\bm 1}'A(\cG_{n, p}) \hat{\bm v}}{\alpha^2 {\bm 1'}{\bm 1}+\beta^2 \hat{\bm v}'\hat{\bm v}}\\
&\ge \frac{\alpha^2 (1+o(1))n^2p+\beta^2\delta np (1+o(1))-2\alpha \beta O(\delta^{\frac{1}{2}} (np)^{\frac{3}{2}})}{\alpha^2 n+\beta^2 } \tag*{(using $\hat {\bm v}'\hat {\bm v}\le {\bm v}'{\bm v} =1$)} \\
&\ge \frac{\alpha^2 (1+o(1))n^2p+\beta^2\delta np (1+o(1))-O(\alpha^2 n^{2}p^{\frac{3}{2}})- O(\beta^2\delta np^{\frac{3}{2}})}{\alpha^2 n+\beta^2 } \\
&\ge \frac{\alpha^2 (1+o(1))n^2p+\beta^2\delta np (1+o(1))}{\alpha^2 n+\beta^2 } \tag*{(using $p \rightarrow 0$)}\nonumber \\
& \ge (1+o(1))\delta np, 
\end{align*}
where the last step uses $\delta <1$, and the third step uses the AM-GM inequality $$\alpha^2 n^{2}p^{\frac{3}{2}}+\beta^2\delta np^{\frac{3}{2}}\ge 2\alpha \beta \delta^{\frac{1}{2}} (np)^{\frac{3}{2}}.$$ Therefore, replacing $\delta$ by a larger $\delta(1+o(1))$ above, along with \eqref{eq:eigenvalue2},  ensures, $\lambda_2(\cG_{n, p}) \geq \delta np$, given $\cC_{\delta(1+o(1)) n p}$, with probability at least $\frac{49}{50}$.

We now proceed to prove \eqref{conditions1} and \eqref{condition2}. Start by observing that given $\cC_{\delta np}$,  
\begin{align*}
(\delta n p)^2+\sum_{i=1}^n \sum_{\substack{1 \leq j \leq n \\ j \ne i}}a_{ij} \ge \bm 1'A(\cG_{n, p}) \bm 1  \geq \sum_{i=1}^n \sum_{\substack{\lceil b\rceil +2 \leq j \leq n,\\ j \ne i}}a_{ij},  
\end{align*} 
where $a_{ij}$, for $1 \leq i < j \leq n$, are i.i.d. Bernoulli$(p).$
Now, using simple Binomial concentration it follows that with probability at least $\frac{99}{100}$  
\begin{equation}\label{binomialbound}
\bm 1'A(\cG_{n, p}) \bm 1 = (1+o(1)) n^2p,
\end{equation} 
which implies the first inequality in \eqref{conditions1}.  For the second inequality in \eqref{conditions1}, observe that
\begin{align}\label{eq:ev2_I}
\hat {\bm v}'A(\cG_{n, p})\hat {\bm v}& = {\bm v}'A(\cG_{n, p}){\bm v}-2\overline{\bm v}'A(\cG_{n, p}){\bm v} + \overline{\bm v}'A(\cG_{n, p})\overline{\bm v} \nonumber \\
& \geq {\bm v}'A(\cG_{n, p}){\bm v}-2\overline{\bm v}'A(\cG_{n, p}){\bm v} \nonumber  \\
&\ge \delta n p- \tfrac{2(\ceil{b}+1)^{\frac{1}{2}} }{n}{\bm 1}'A(\cG_{n, p}){\bm v}, 
\end{align}
using ${\bm v}'A(\cG_{n, p}){\bm v} \geq \delta n p$, as $b = \delta np$
(as in \eqref{eq:ev_clique}), and \eqref{eq:vbar}. Then, recalling the definitions of $\bm v$ and $\overline{\bm v}$ from \eqref{eq:evcandidate} and \eqref{eq:vbar}, respectively, it follows that 
\begin{equation}\label{relation34}
(\ceil{b}+1)^{\frac{1}{2}} \left\{{\bm 1}'A(\cG_{n, p}){\bm v} \right\}=n \left\{\overline{\bm v}'A(\cG_{n, p}){\bm v} \right\}= \bm 1'A(\cG_{n, p}){\bm 1}_{\ceil{b}+1},
\end{equation}
where 
$${\bm 1}_{\ceil{b}+1} := \left(\underset{ \lceil b\rceil +1 \text{ times}}{\underbrace{1, 1, \ldots, 1}}, \underset{ n-\lceil b\rceil - 1 \text{ times}}{\underbrace{0, 0, \ldots,0}}\right)'.$$  
Now, on the event $\cC_{\delta np}$, 
\begin{align}\label{eq:ev2_II}
\bm 1'A(\cG_{n, p}){\bm 1}_{\ceil{b}+1} \leq  &  \lceil b\rceil  (\lceil b\rceil +1)  +  \sum_{1 \leq i  \leq \lceil b\rceil +1}  \sum_{1 \leq j \leq n} a_{ij} \nonumber \\ 
& = O(\delta^2 n^2 p^2) +   \sum_{1 \leq i  \leq \lceil b\rceil +1}  \sum_{1 \leq j \leq n} a_{ij}.
\end{align}
Finally, by Markov's inequality, the second term above is less than $$ 100 \E \sum_{1 \leq i  \leq \lceil b\rceil +1}  \sum_{1 \leq j \leq n} a_{ij} =O(n b p)= O(\delta n^2 p^2),$$ with probability at least $\frac{99}{100}$. 
Thus with probability at least $\frac{99}{100},$ 
\begin{align}\label{finalbound}
\bm 1'A(\cG_{n, p}){\bm 1}_{\ceil{b}+1}&= O(\delta n^2p^2), 
\end{align}
Assuming the above event, and using \eqref{relation34}, we conclude that in \eqref{eq:ev2_I}, 
$${\hat {\bm v}'A(\cG_{n, p})\hat {\bm v}}\ge \delta np-O(\delta n p^2)=(1+o(1))\delta np,$$
which proves the second inequality in \eqref{conditions1}.

Finally, recalling \eqref{eq:vbar} and $b=\delta n p$, gives   
\begin{align*}
{ {\bm 1}'A(\cG_{n, p})\hat {\bm v}} & =  {\bm 1}'A(\cG_{n, p}){\bm v}-{\bm 1}'A(\cG_{n, p})\overline{\bm v}  = {\bm 1}'A(\cG_{n, p}){\bm v}- \tfrac{(\ceil{b}+1)^{\frac{1}{2}} }{n}{\bm 1}'A(\cG_{n, p}){\bm 1} \le O(\delta^{\frac{1}{2}} (np)^{\frac{3}{2}}),
\end{align*}
where the bound on the first term above follows from \eqref{relation34} and \eqref{finalbound}, and the same bound on the second term follows from 
\eqref{binomialbound}. This proves \eqref{condition2} and completes the proof of the lemma. 
\end{proof} 

\section{Discussions and Future directions}
\label{sec:openproblems}

The results in this paper were concerned with the analysis of variational problems for the upper tails of edge eigenvalues of $\cG_{n, p}$. The corresponding problem for the lower tail is also interesting, which for the case of the largest eigenvalue is well understood. It follows from results in Lubetzky and Zhao \cite[Proposition 3.9]{LZ-dense} for the dense case and, more generally, from Cook and Dembo \cite[Theorem 1.21]{CD18}, that the lower tail problem for $\lambda_1(\cG_{n, p})$ exhibits replica symmetry, that is, 
the corresponding variational problem is minimized by the constant function. More precisely, it is known that, for $\frac{\log n}{n} \ll p \leq \frac{1}{2}$ and $0 < q < p $ (such that $s:=q/p \in (0, 1)$ is fixed), 
$$\P(\lambda_1(\cG_{n, p})  \leq q(n-1))=e^{-(1+o(1)) {n \choose 2} I_p(q)}.$$

Finally, this work leaves many questions open, especially for $\lambda_2(\cG_{n,p})$ and other smaller edge eigenvalues. We list below a few of them.

\begin{itemize}
\item[(1)] Extending Corollary \ref{cor:ev2} to the case $\delta>1$. The key issue one faces is that $\lambda_2(\cG_{n,p}) >np$ automatically guarantees that $\lambda_1(\cG_{n,p})>np$ as well, and in particular the Perron-Frobenius eigenvector will now be not close to the constant vector. Hence, one cannot automatically transfer the knowledge about the operator norm of $A(\cG_{n,p})-p \bm1\bm1'$ to the second eigenvalue of $A(\cG_{n,p})$. 

\item[(2)] Establishing a joint large deviation for cycle homomorphism densities of different sizes, and using these, or otherwise, obtain a joint LDP for $\lambda_1(\cG_{n,p})$ and $\lambda_2(\cG_{n,p})$, or of various spectral moments. 

\item[(3)] Compute large deviation probabilities for the $k$-th largest eigenvalue $\lambda_k(\cG_{n,p})$.  Does the upper tail large deviation behavior of $\lambda_k(\cG_{n,p})$ agree with the probability of planting $k$ small cliques of appropriate sizes (up to negligible factors in the exponential scale) in some regime of the parameter space?
\end{itemize}

\section{Appendix: Estimates for the Entropy Function}

In this section, we collect a few relevant estimates for $I_p(x)$ from~\cite{LZ-sparse}, which we used in our proofs. The $\sim$ notation below is with respect to limits as $p\to 0$.

\begin{lem}[{\cite[Lemma~3.3]{LZ-sparse}}]\label{est1}
If $0\le  x \ll p$, then $I_p(p + x) \sim \frac12 x^2/p$, whereas when $p \ll x \le 1- p$ we have
$I_p(p + x) \sim x \log(x/p)$.
\end{lem}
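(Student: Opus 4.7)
The plan is to prove both asymptotics by direct Taylor expansion of the identity
\[
I_p(p+x) = (p+x)\log(1 + x/p) + (1-p-x)\log(1 - x/(1-p)).
\]
In the regime $0 \le x \ll p$, both ratios $x/p$ and $x/(1-p)$ are $o(1)$, so I would expand each logarithm to second order. The linear-in-$x$ contributions from the two summands cancel (this reflects $I_p'(p) = 0$), and the quadratic contributions combine to $\frac{x^2}{2p(1-p)}$. Since $p \to 0$ forces $1-p \to 1$, this equals $(1+o(1))\tfrac{x^2}{2p}$, and the cubic remainder is $O(x^3/p^2) = o(x^2/p)$ because $x/p \to 0$. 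That settles the first half.

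For $p \ll x \le 1-p$ I would treat the two summands separately. Splitting the first as $(p+x)\log(1+x/p) = (p+x)\log(x/p) + (p+x)\log(1+p/x)$ and using $\log(1+p/x) = O(p/x)$ gives $x\log(x/p) + p\log(x/p) + O(p)$. For the second summand, the function $t\mapsto (1-t)\log(1-t)$ is uniformly bounded on $[0,1]$ and satisfies $(1-t)\log(1-t) = -t + O(t^2)$ for $t$ bounded away from $1$; applied with $t = x/(1-p)$ this gives $(1-p-x)\log(1 - x/(1-p)) = -x + O(x^2)$ away from the boundary and remains $O(1)$ uniformly. Combining,
\[
I_p(p+x) = x\log(x/p) + p\log(x/p) - x + O(p) + O(x^2),
\]
and dividing by $x\log(x/p)$, each error piece is $o(1)$: indeed $p/x \to 0$, $1/\log(x/p) \to 0$, and $x/\log(x/p) \to 0$ since $x \le 1$.

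The argument is essentially bookkeeping with elementary expansions of $\log(1+y)$, so I expect no conceptual obstacle. The only mild subtlety is verifying that each residual term is smaller than the claimed leading order in the \emph{relative} sense, which in the second regime is ensured by $p \ll x$ forcing $\log(x/p) \to \infty$. The boundary behaviour near $x = 1-p$ is not a genuine issue either, because $(1-t)\log(1-t)$ vanishes at $t = 1$, so the second summand only becomes smaller there while the first term $(p+x)\log((p+x)/p)\sim \log(1/p)$ continues to dominate.
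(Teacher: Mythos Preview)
Your argument is correct. Note, however, that the paper does not give its own proof of this lemma: it is quoted verbatim from \cite[Lemma~3.3]{LZ-sparse} and used as a black box, so there is no ``paper's proof'' to compare against. Your direct Taylor-expansion computation is the natural way to establish the result and matches what one would expect the original proof in \cite{LZ-sparse} to look like; the only point worth tightening in a final write-up is the case distinction in the second regime (small $t$ versus $t$ near $1$), which you sketch informally at the end but which is indeed routine since the second summand is uniformly $O(1)$ while $x\log(x/p)\to\infty$ whenever $x$ is bounded away from $0$.
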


\begin{lem}[{\cite[Lemma~3.4]{LZ-sparse}}]\label{est2} There is some constant $p_0 > 0$ such that for every $0 < p \le p_0$,
\[ I_p(p + x)\ge {(x/b)}^{2}I_p(p + b)\qquad\mbox{ for any $0\le x \le b\le 1-p- \log(1-p)$}\,.\]
\end{lem}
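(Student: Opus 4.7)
The plan is to reduce the claim to a single monotonicity statement: the ratio $h(x):=I_p(p+x)/x^{2}$ is non-increasing in $x$ on the interval $[0,\,1-p-\log(1-p)]$. Once this is established, $x\le b$ in that range gives $h(x)\ge h(b)$, which is exactly the claimed bound after clearing denominators.

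Differentiating yields $h'(x)=\phi(x)/x^{3}$, where
\[\phi(x)\;:=\;x\,I_p'(p+x)\;-\;2\,I_p(p+x),\]
so it suffices to prove $\phi(x)\le 0$ on the relevant range. Using the identities $I_p'(y)=\log\bigl(y(1-p)/[p(1-y)]\bigr)$ and $I_p''(y)=1/[y(1-y)]$, a direct calculation gives $\phi(0)=0$, $\phi'(0)=0$, and
\[\phi''(x)\;=\;x\,I_p'''(p+x)\;=\;\frac{x\,(2(p+x)-1)}{\bigl[(p+x)(1-p-x)\bigr]^{2}},\]
which is strictly negative on $(0,\tfrac{1}{2}-p)$ and strictly positive on $(\tfrac{1}{2}-p,\,1-p)$.

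This sign structure pins down the shape of $\phi$. Since $\phi''$ is first negative and then positive, $\phi'$ starts at $0$, strictly decreases to a negative minimum at $x=\tfrac{1}{2}-p$, and then strictly increases. Because $I_p'(p+x)\to+\infty$ strictly faster than $I_p(p+x)$ as $x\to 1-p$, the derivative $\phi'$ eventually becomes positive, so it has a unique positive zero $x_{0}\in(\tfrac{1}{2}-p,\,1-p)$. Consequently $\phi$ decreases from $0$ down to a negative minimum at $x_{0}$ and then strictly increases, crossing $0$ again at a unique point $x_{1}\in(x_{0},\,1-p)$. Thus $\phi\le 0$ exactly on $[0,x_{1}]$, which yields monotonicity of $h$ on that interval.

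The remaining task is to verify that the stated endpoint $1-p-\log(1-p)$ lies inside $[0,x_{1}]$ for $p$ sufficiently small, i.e., that $\phi\bigl(1-p-\log(1-p)\bigr)\le 0$ whenever $p\le p_{0}$. I would substitute this value into the closed form
\[\phi(x)\;=\;x\log\frac{(p+x)(1-p)}{p\,(1-p-x)}\;-\;2\Bigl[(p+x)\log\tfrac{p+x}{p}+(1-p-x)\log\tfrac{1-p-x}{1-p}\Bigr]\]
and expand asymptotically as $p\to 0$, where the leading contribution is expected to be a strictly negative constant (morally of the order of $-\log 2$). The main obstacle will be this final asymptotic bookkeeping: near the right endpoint the quantity $1-p-x$ is small, so several logarithmic terms individually blow up and must cancel carefully to leave a strictly negative limit. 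Once this is confirmed, shrinking $p_{0}$ to absorb the remainder terms completes the proof, with the rest of the argument following cleanly from the second-derivative analysis of $\phi$.
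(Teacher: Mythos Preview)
The paper does not give its own proof of this lemma; it is quoted verbatim from \cite{LZ-sparse} and simply cited. So there is nothing to compare against here, and your approach---showing that $I_p(p+x)/x^{2}$ is non-increasing via the sign analysis of $\phi(x)=xI_p'(p+x)-2I_p(p+x)$---is the natural one, and your second-derivative computation $\phi''(x)=xI_p'''(p+x)$ together with the shape argument is correct.

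There is, however, a real problem with your final step. Note that for $p\in(0,1)$ one has $-\log(1-p)>0$, so the stated upper endpoint satisfies
\[
1-p-\log(1-p)\;>\;1-p,
\]
i.e.\ it lies \emph{outside} the domain on which $I_p(p+b)$ is defined. Thus ``substituting this value into the closed form'' gives $1-p-x=\log(1-p)<0$, and your planned asymptotic expansion cannot be carried out as written (and your heuristic that the answer is ``of the order of $-\log 2$'' cannot be right either). In fact, comparing with Corollary~\ref{est3}, the intended endpoint is almost certainly
\[
b^{*}\;=\;1-p-\tfrac{1}{\log(1/p)},
\]
which \emph{is} in $(0,1-p)$; plugging this into your formula gives $\phi(b^{*})=-\log(1/p)+\log\log(1/p)+O(1)\to-\infty$, so your monotonicity argument does go through with room to spare. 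You should flag the typo rather than try to force the computation at an undefined point.
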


\begin{cor}[{\cite[Corollary 3.5]{LZ-sparse}}]\label{est3} There is some constant $p_0 > 0$ such that for every $0 < p \le p_0$,
\[I_p(p + x) \ge x^2I_p(1 - 1/ \log(1/p)) \sim x^2I_p(1)\qquad\mbox{ for any  $0 \le x \le 1 - p$}\,.\] 
\end{cor}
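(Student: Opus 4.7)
The plan is to extract the four a priori estimates by pairing the single entropic hypothesis \eqref{eq:apriori} (equivalently, $\E[I_p(p+U)]\lesssim \delta^2 p^2\log(1/p)$, using $I_p(1)\sim \log(1/p)$) with the pointwise lower bounds on $I_p(p+x)$ recalled in Lemmas \ref{est1}, \ref{est2} and Corollary \ref{est3}. The $L^2$ and $L^1$ bounds \eqref{eq:EU2-upper}--\eqref{eq:EU-upper} follow directly by integration after a suitable dichotomy in the range of $U$; the degree estimates \eqref{eq:B_b-upper}--\eqref{eq:low-deg-sq} will be obtained by first pushing the hypothesis to a statement on the degree function $d(x)=\int U(x,y)\,\mathrm dy$ via Jensen's inequality, and then applying the same pointwise $I_p$ estimates at the single reference level $b$.

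Bound \eqref{eq:EU2-upper} is immediate from Corollary \ref{est3}: the pointwise inequality $I_p(p+U)\ge U^2 I_p(1)$ integrates to $\E[U^2]\lesssim \delta^2 p^2$. For the sharper $L^1$ bound \eqref{eq:EU-upper} I would split the domain at $U=2p$. On $\{U\le 2p\}$, Lemma \ref{est2} applied with reference level $2p$ (for which $I_p(3p)\asymp p$) gives $I_p(p+U)\gtrsim U^2/p$; integrating and then applying Cauchy--Schwarz against unit Lebesgue mass yields $\int_{\{U\le 2p\}} U\lesssim (\delta^2 p^3\log(1/p))^{1/2} = \delta p^{3/2}\sqrt{\log(1/p)}$. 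On the complementary region Lemma \ref{est1} gives $I_p(p+U)\sim U\log(U/p)\gtrsim U$, so $\int_{\{U>2p\}} U\lesssim \delta^2 p^2\log(1/p)$, which is absorbed into the previous term since $\delta\sqrt{p\log(1/p)}=o(1)$ under $\delta=O(1)$ and $p\to 0$.

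For the degree estimates \eqref{eq:B_b-upper} and \eqref{eq:low-deg-sq}, convexity of $I_p$ together with Jensen's inequality applied in $y$ give $\int I_p(p+U(x,y))\,\mathrm dy\ge I_p(p+d(x))$ for each $x$, so $\int I_p(p+d(x))\,\mathrm dx\lesssim \delta^2 p^2\log(1/p)$. For \eqref{eq:B_b-upper}, monotonicity of $I_p(p+\cdot)$ yields $I_p(p+d(x))\ge I_p(p+b)$ on $B_b$; Lemma \ref{est1} evaluates $I_p(p+b)\sim b\log(b/p)$, and the range assumption $b\gg\sqrt{p\log(1/p)}$ forces $\log(b/p)\gtrsim \log(1/p)$, hence $I_p(p+b)\gtrsim b\log(1/p)$, delivering $\mu(B_b)\lesssim \delta^2 p^2/b$. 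For \eqref{eq:low-deg-sq}, on $\overline B_b$ we have $d(x)<b$, so Lemma \ref{est2} rearranges as $d(x)^2\le (b^2/I_p(p+b))\,I_p(p+d(x))$; integrating over $\overline B_b$ and invoking $I_p(p+b)\gtrsim b\log(1/p)$ once more gives $\int_{\overline B_b} d(x)^2\lesssim \delta^2 p^2 b$.

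The main potential obstacle is the allowance $\delta=o(1)$ beyond the $\delta=\Theta(1)$ setting of \cite[Lemma 4.2]{BGLZ}. Fortunately none of the steps above divides by a positive power of $\delta$, so the proof transfers uniformly, provided one verifies the quantitative asymptotic $\log(b/p)\gtrsim \log(1/p)$ on the full admissible range $\sqrt{p\log(1/p)}\ll b\ll 1$. This is elementary: $b\gg\sqrt{p\log(1/p)}$ implies $b/p\ge \sqrt{\log(1/p)/p}$, and taking logarithms yields $\log(b/p)\ge \tfrac12\log(1/p)+O(\log\log(1/p))$, which suffices.
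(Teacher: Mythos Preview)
Your proposal does not address the stated Corollary~\ref{est3} at all; that corollary is quoted from \cite{LZ-sparse} and carries no proof in this paper. What you have written is a proof of Lemma~\ref{lem:apriori}. Assuming that was your intent, I compare against the paper's appendix proof of that lemma.

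Your argument is correct, and for \eqref{eq:EU2-upper}, \eqref{eq:B_b-upper}, \eqref{eq:low-deg-sq} it coincides with the paper's: Corollary~\ref{est3} for the $L^2$ bound, Jensen in $y$ to pass to degrees, then monotonicity together with Lemma~\ref{est1} for the measure bound and Lemma~\ref{est2} for the truncated second moment. Your explicit check that $b\gg\sqrt{p\log(1/p)}$ yields $\log(b/p)\gtrsim\log(1/p)$ is a welcome clarification the paper glosses over.

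The one genuine divergence is in \eqref{eq:EU-upper}. The paper obtains it in one line via Jensen's inequality in both variables: $I_p(p+\E[U])\le \E[I_p(p+U)]\lesssim \delta^2 p^2 I_p(1)$, then reads off $\E[U]\lesssim \delta p^{3/2}\sqrt{\log(1/p)}$ from Lemma~\ref{est1} and monotonicity of $x\mapsto I_p(p+x)$. Your route---split at $U=2p$, use Lemma~\ref{est2} with reference level $2p$ plus Cauchy--Schwarz on the small-$U$ piece, and a crude $I_p(p+U)\gtrsim U$ bound on the large-$U$ piece---also works and is robust in $\delta$, but is longer. One minor caveat: invoking Lemma~\ref{est1} on $\{U>2p\}$ is not quite legitimate since that lemma is asymptotic for $x\gg p$; however the inequality $I_p(p+U)\gtrsim U$ for $U\ge 2p$ is elementary (e.g.\ $(p+U)\log((p+U)/p)\ge U\log 3$ dominates the negative term), so the step is easily repaired.
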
 

We conclude with the following lemma, which is required in the proof of Lemma \ref{lm:restrict23}. 

\begin{lem}\label{lm:Ip}
For {$p \leq \frac{1}{2}$} and $0 \leq x \leq p$, $I_p(p-x) \geq I_p(p+x)$. 
\end{lem}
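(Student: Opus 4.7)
The plan is to prove the inequality by the standard trick of fixing $p$ and treating the difference as a function of $x$: define
\[
f(x) := I_p(p-x) - I_p(p+x), \qquad 0 \leq x \leq p.
\]
Then $f(0) = 0$, so it suffices to show that $f$ is non-decreasing on $[0,p]$, i.e.\ $f'(x) \geq 0$.

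The derivative of $I_p$ is $I_p'(y) = \log\frac{y(1-p)}{p(1-y)}$, hence
\[
f'(x) = -I_p'(p-x) - I_p'(p+x) = -\log\!\left[\frac{(p-x)(p+x)(1-p)^2}{p^2(1-p-x)(1-p+x)}\right] = -\log\frac{(p^2-x^2)(1-p)^2}{p^2((1-p)^2 - x^2)}.
\]
So the desired inequality $f'(x)\geq 0$ is equivalent to
\[
(p^2 - x^2)(1-p)^2 \leq p^2\bigl((1-p)^2 - x^2\bigr),
\]
which after cancelling $p^2(1-p)^2$ from both sides becomes $x^2 p^2 \leq x^2 (1-p)^2$, i.e.\ $p \leq 1-p$. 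This holds precisely because $p \leq 1/2$, and is in fact an equality only when $x=0$ or $p=1/2$. Combined with $f(0)=0$, this yields $f(x) \geq 0$ on $[0,p]$, as claimed.

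There is essentially no obstacle here: once the derivative is computed, everything reduces to the elementary inequality $p \leq 1-p$. The only minor thing to check is that the arguments of the logarithms in $I_p(p\pm x)$ remain in $(0,1)$, which follows from $0 \leq x \leq p \leq 1/2$, giving $0 \leq p-x$ and $p+x \leq 2p \leq 1$.
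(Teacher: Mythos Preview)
Your proof is correct and follows essentially the same approach as the paper: define the difference $F_p(x)=I_p(p-x)-I_p(p+x)$, note $F_p(0)=0$, and show $F_p'(x)\ge 0$. The only minor difference is that the paper establishes $F_p'(x)\ge 0$ by computing $F_p''(x)=\frac{2(1-2p)x}{(1-p+x)(p-x)(1-p-x)(p+x)}\ge 0$ together with $F_p'(0)=0$, whereas you simplify $f'(x)$ directly and reduce the sign question to the elementary inequality $p^2\le(1-p)^2$; your route is a touch more direct but the idea is the same.
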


\begin{proof} Define $F_p(x):=I_p(p-x) - I_p(p+x)$. It is easy to check that 
$$F'_p(x)=\log\left(\frac{p-x-1}{p-1}\right)-\log\left(\frac{p-x}{p}\right)-\log\left(\frac{p+x}{p}\right)+\log\left(\frac{p+x-1}{p-1}\right).$$ Now, using $F_p'(0)=0$ and $F_p''(x)=\frac{2(1-2p)x}{(1-p+x)(p-x)(1-p-x)(p+x)}\geq 0$, {for $p \leq \frac{1}{2}$}, implies 
$F_p'(\cdot)$ is non-decreasing, that is, $F_p'(x)\geq F_p(0)=0$. This, in turn implies that $F_p(\cdot)$ is non-decreasing, that is, $F_p(x) \geq F_p'(0)=0$. 
\end{proof}

\small \noindent{\it Acknowledgment}: The authors thank Debapratim Banerjee, Nick Cook, and Amir Dembo for useful discussions. The authors also thank the anonymous referees for their  detailed and thoughtful comments, which greatly improved the quality and presentation of the paper.

\bibliographystyle{abbrv}
\bibliography{ldp_ref}

\begin{thebibliography}{10}

\bibitem{AKV}
N.~Alon, M.~Krivelevich, and V.~H. Vu.
\newblock On the concentration of eigenvalues of random symmetric matrices.
\newblock {\em Israel Journal of Mathematics}, 131(1):259--267, 2002.

\bibitem{aging}
G.~B. Arous, A.~Dembo, and A.~Guionnet.
\newblock Aging of spherical spin glasses.
\newblock {\em Probability Theory and Related Fields}, 120(1):1--67, 2001.

\bibitem{freeentropy}
G.~B. Arous and A.~Guionnet.
\newblock Large deviations for {W}igner's law and {V}oiculescu's
  non-commutative entropy.
\newblock {\em Probability Theory and Related Fields}, 108(4):517--542, 1997.

\bibitem{augeri2}
F.~Augeri.
\newblock Large deviations principle for the largest eigenvalue of wigner
  matrices without gaussian tails.
\newblock {\em Electron. J. Probab.}, 21:49 pp., 2016.

\bibitem{augeri1}
F.~Augeri.
\newblock Nonlinear large deviation bounds with applications to traces of
  wigner matrices and cycles counts in {E}rd{\H{o}}s-{R}\'{e}nyi graphs.
\newblock {\em arXiv:1810.01558}, 2018.

\bibitem{austin}
T.~Austin.
\newblock The structure of low-complexity gibbs measures on product spaces.
\newblock {\em arXiv preprint arXiv:1810.07278}, 2018.

\bibitem{ab_rb}
A.~Basak and R.~Basu.
\newblock Upper tail large deviations of the cycle counts in
  {E}rd{\H{o}}s-{R}\'{e}nyi graphs in the full localized regime.
\newblock {\em arXiv:1912.11410}, 2019.

\bibitem{BM17}
A.~Basak and S.~Mukherjee.
\newblock Universality of the mean-field for the potts model.
\newblock {\em Probability Theory and Related Fields}, 168(3-4):557--600, 2017.

\bibitem{BGLZ}
B.~B. Bhattacharya, S.~Ganguly, E.~Lubetzky, and Y.~Zhao.
\newblock Upper tails and independence polynomials in random graphs.
\newblock {\em Advances in Mathematics}, 319(313--347), 2017.

\bibitem{BGSZ}
B.~B. Bhattacharya, S.~Ganguly, X.~Shao, and Y.~Zhao.
\newblock Upper tails for arithmetic progressions in a random set.
\newblock {\em International Mathematics Research Notices}, to appear, 2018.

\bibitem{bordenave1}
C.~Bordenave and P.~Caputo.
\newblock A large deviation principle for wigner matrices without gaussian
  tails.
\newblock {\em The Annals of Probability}, 42(6):2454--2496, 2014.

\bibitem{BCLSV08}
C.~Borgs, J.~T. Chayes, L.~Lov{\'a}sz, V.~T. S{\'o}s, and K.~Vesztergombi.
\newblock Convergent sequences of dense graphs. {I}. {S}ubgraph frequencies,
  metric properties and testing.
\newblock {\em Adv. Math.}, 219(6):1801--1851, 2008.

\bibitem{BCLSV12}
C.~Borgs, J.~T. Chayes, L.~Lov{\'a}sz, V.~T. S{\'o}s, and K.~Vesztergombi.
\newblock Convergent sequences of dense graphs {II}. {M}ultiway cuts and
  statistical physics.
\newblock {\em Ann. of Math. (2)}, 176(1):151--219, 2012.

\bibitem{CD16}
S.~Chatterjee and A.~Dembo.
\newblock Nonlinear large deviations.
\newblock {\em Adv. Math.}, 299:396--450, 2016.

\bibitem{CV11}
S.~Chatterjee and S.~R.~S. Varadhan.
\newblock The large deviation principle for the {E}rd{\H o}s-{R}{\'e}nyi random
  graph.
\newblock {\em European J. Combin.}, 32(7):1000--1017, 2011.

\bibitem{CV12}
S.~Chatterjee and S.~R.~S. Varadhan.
\newblock Large deviations for random matrices.
\newblock {\em Comm. Stoch. Analysis}, 6(1):1--13, 2012.

\bibitem{CD18}
N.~Cook and A.~Dembo.
\newblock Large deviations of subgraph counts for sparse
  {E}rd{\H{o}}s-{R}\'enyi graphs.
\newblock {\em arXiv:1809.11148}, 2018.

\bibitem{satya1}
D.~S. Dean and S.~N. Majumdar.
\newblock Large deviations of extreme eigenvalues of random matrices.
\newblock {\em Physical Review Letters}, 97(16):160201, 2006.

\bibitem{Eldan}
R.~Eldan.
\newblock Gaussian-width gradient complexity, reverse log-{S}obolev
  inequalities and nonlinear large deviations.
\newblock {\em Geom. Funct. Anal.}, to appear, 2018.

\bibitem{yau}
L.~Erd{\H{o}}s, A.~Knowles, H.-T. Yau, and J.~Yin.
\newblock Spectral statistics of {E}rd{\H{o}}s-{R}\'{e}nyi graphs ii:
  Eigenvalue spacing and the extreme eigenvalues.
\newblock {\em Communications in Mathematical Physics}, 314(3):587--640, 2012.

\bibitem{guionnet1}
A.~Guionnet and J.~Husson.
\newblock Large deviations for the largest eigenvalue of rademacher matrices.
\newblock {\em arXiv:1810.01188}, 2018.

\bibitem{upper_tail_localization}
M.~Harel, F.~Mousset, and W.~Samotij.
\newblock Upper tails via high moments and entropic stability.
\newblock {\em arXiv preprint arXiv:1904.08212}, 2019.

\bibitem{Lov12}
L.~Lov{{\'a}}sz.
\newblock {\em Large networks and graph limits}, volume~60 of {\em American
  Mathematical Society Colloquium Publications}.
\newblock American Mathematical Society, Providence, RI, 2012.

\bibitem{LS06}
L.~Lov{\'a}sz and B.~Szegedy.
\newblock Limits of dense graph sequences.
\newblock {\em J. Combin. Theory Ser. B}, 96(6):933--957, 2006.

\bibitem{LZ-dense}
E.~Lubetzky and Y.~Zhao.
\newblock On replica symmetry of large deviations in random graphs.
\newblock {\em Random Structures Algorithms}, 47(1):109--146, 2015.

\bibitem{LZ-sparse}
E.~Lubetzky and Y.~Zhao.
\newblock On the variational problem for upper tails in sparse random graphs.
\newblock {\em Random Structures Algorithms}, 50(3):420--436, 2017.

\bibitem{ev_concentration}
G.~Lugosi, S.~Mendelson, and N.~Zhivotovskiy.
\newblock Concentration of the spectral norm of {E}rd{\H{o}}s-{R}\'{e}nyi
  random graphs.
\newblock {\em arXiv:1801.02157}, 2018.

\bibitem{taotopics}
T.~Tao.
\newblock {\em Topics in random matrix theory}, volume 132.
\newblock American Mathematical Soc., 2012.

\bibitem{satya2}
P.~Vivo, S.~N. Majumdar, and O.~Bohigas.
\newblock Large deviations of the maximum eigenvalue in {W}ishart random
  matrices.
\newblock {\em Journal of Physics A: Mathematical and Theoretical},
  40(16):4317, 2007.

\bibitem{warnke_missing_log}
L.~Warnke.
\newblock On the missing log in upper tail estimates.
\newblock {\em Journal of Combinatorial Theory, Series B}, 140:98--146, 2020.

\bibitem{yan}
J.~Yan.
\newblock Nonlinear large deviations: Beyond the hypercube.
\newblock {\em arXiv preprint arXiv:1703.08887}, 2017.

\bibitem{Zhao-lower}
Y.~Zhao.
\newblock On the lower tail variational problem for random graphs.
\newblock {\em Combin. Probab. Comput.}, 26(2):301--320, 2017.

\end{thebibliography}

\end{document}